\newtheorem{theorem}{Theorem}[section]
\newtheorem{lemma}{Lemma}[section]
\newtheorem{prop}{Proposition}[section]
\newtheorem{definition}{Definition}[section]
\newtheorem{rem}{Remark}[section]
\newtheorem{ex}{Example}[section]
\begin{document}
\thispagestyle{empty}
\begin{center}

{\bf\Large Exact stability and instability regions for two-dimensional linear autonomous multi-order systems of fractional-order differential equations}

\vspace{8mm}


{\large Oana Brandibur,\  Eva Kaslik}

\vspace{3mm}

{\em Department of Mathematics and Computer Science\\ West University of Timi\c soara, Romania \\
E-mail: oana.brandibur@e-uvt.ro, \ eva.kaslik@e-uvt.ro}

\vspace{5mm}
\end{center}
\vspace{8mm}

\fbox{
\begin{minipage}{33em}
This paper is now published (in revised form) in Fract. Calc. Appl. Anal. Vol. 24, No 1 (2021), pp. 225–253,  DOI: 10.1515/fca-2021-0010
\end{minipage}}

\vspace{8mm}

\textbf{Abstract:} Necessary and sufficient conditions are explored for the asymptotic stability and instability of linear two-dimensional autonomous systems of  fractional-order differential equations with Caputo derivatives. 
Fractional-order-dependent and fractional-order-independent stability and instability properties are fully characterised, in terms of the main diagonal elements of the systems' matrix, as well as its determinant.

 \medskip

\section{Introduction}
\label{sec:1}
Within the past decades, a growing number of scientific papers debated the pertinence of fractional calculus in the mathematical modeling of real world phenomena, suggesting that fractional-order systems are capable of delivering more realistic results in a large number of practical applications \cite{Cottone,Engheia,Henry_Wearne,Heymans_Bauwens,Mainardi_1996} compared to their integer-order counterparts. The main justification of this fact is that fractional-order derivatives provide for the incorporation of both memory and hereditary properties. Indeed, \cite{du2013measuring} endorses the index of memory as a plausible physical interpretation of the order of a fractional derivative. 

As in the case of classical dynamical systems theory, stability analysis plays a leading role in the qualitative theory of fractional-order systems. Two surveys \cite{Li-survey,Rivero2013stability} have recently summarized the main results that have been obtained with respect to the stability properties of fractional-order systems. Nevertheless, it has to be emphasized that most results have been obtained in the framework of linear autonomous commensurate fractional-order systems. In this context, it is important to note that a generalization of the well-known stability theorem of Matignon \cite{Matignon} has been recently obtained \cite{Sabatier2012stability}. Furthermore, linearization theorems for fractional-order systems have been presented in \cite{Li_Ma_2013,Wang2016stability}, providing analogues of the classical Hartman-Grobman theorem. 

On the other hand, the stability analysis of incommensurate fractional-order systems has received significantly less attention throughout the years. Stability properties of linear incommensurate fractional-order systems with rational orders have been investigated in \cite{Petras2008stability}. Oscillatory behaviour in two-dimensional incommensurate fractional-order systems has been explored in \cite{Datsko2012complex,Radwan2008fractional}. Bounded input bounded output stability of systems with irrational transfer functions has been recently analyzed in \cite{trachtler2016bibo}. The asymptotic behavior of the solutions of some classes of linear multi-order systems of fractional differential equations (such as systems with block triangular coefficient matrices) has been investigated in \cite{diethelm2017asymptotic}.

Multi-term fractional-order differential equations \cite{atanackovic2014cauchy} and their stability properties are closely related to multi-order systems of fractional differential equations. Very recently, the stability of two-term fractional-order differential and difference equations has been analyzed in   \cite{cermak2015asymptotic,cermak2015stability,jiao2012stability}.

Taking into account the above mentioned  developments in the theory of fractional-order systems, necessary and sufficient stability and instability conditions have been explored in the case of linear autonomous two-dimensional incommensurate fractional-order systems \cite{Brandibur_2017, Brandibur_2018}. In the first paper \cite{Brandibur_2017}, we have investigated stability properties of two-dimensional systems composed of a fractional-order differential equation and a classical first-order differential equation. These results have been extended in \cite{Brandibur_2018} for the case of general two-dimensional incommensurate fractional-order systems with Caputo derivatives. Specifically, for fractional orders $0<q_1<q_2\leq 1$, necessary and sufficient conditions have been obtained for the $\mathcal{O}(t^{-q_1})$-asymptotic stability of the trivial equilibrium, in terms of the determinant $\delta$ of the linear system's matrix, as well as the elements $a_{11}$ and $a_{22}$ of its main diagonal. Moreover, sufficient conditions have also been investigated which guarantee the stability and instability of the fractional-order system, regardless of the fractional orders. 

The aim of this work is to complete the stability analysis of two-dimensional incommensurate fractional-order systems with Caputo derivatives, by extending the results presented in \cite{Brandibur_2018,brandibur2019_gejji}. On one hand, we fully characterize the fractional-order dependent stability and instability properties of the considered system, by exploring certain symmetries related to the characteristic equation associated to our stability problem. On the other hand, we obtain necessary and sufficient conditions for the stability and instability of the system, regardless of the choice of fractional orders, in terms of the characteristic parameters $a_{11}$, $a_{22}$ and $\delta$ mentioned previously. These latter results are particularly useful in practical applications where the exact fractional orders are not precisely known. 

The paper is structured as follows. Section 2 is dedicated to presenting some preliminary results and important definitions. The main results are included in section 3 as follows: we first present the statements of the main fractional-order-independent stability and instability theorems, then we prove fractional-order-dependent stability and instability results, followed by the proofs of the main theorems. For the sake of completeness, all proofs are presented in detail. Finally, we draw some conclusions and suggest several directions for future research in section 4.  

\section{Preliminaries}
\label{sec:2}

Let us consider the  $n$-dimensional fractional-order system
with Caputo derivatives \cite{Kilbas,Lak,Podlubny}: \begin{equation}\label{sys.gen}
^c\!D^\mathbf{q}\mathbf{x}(t)=f(t,\mathbf{x})
\end{equation}
where $\mathbf{q}=(q_1,q_2,...,q_n)\in(0,1)^n$ and $f:[0,\infty)\times\mathbb{R}^n\rightarrow \mathbb{R}^n$ is a continuous function on the whole domain of definition, Lipschitz-continuous with respect to the second variable, such that
$$f(t,0)=0\quad \textrm{for any }t\geq 0.$$

Let $\varphi(t,\mathbf{x}_0)$ denote the unique solution of (\ref{sys.gen}) satisfying the initial condition $\mathbf{x}(0)=\mathbf{x}_0\in\mathbb{R}^n$. The existence and uniqueness of the initial value problem associated to system (\ref{sys.gen}) is guaranteed by the previously mentioned properties of the function $f$ \cite{Diethelm_book}.

It is important to emphasize that in general, due to the presence of the memory effect, the asymptotic stability of the trivial solution of system (\ref{sys.gen}) is not of exponential type \cite{cermak2015stability,Gorenflo_Mainardi}. Hence, the notion of Mittag-Leffler stability has been introduced for fractional-order differential equations \cite{Li_Chen_Podlubny}, as a special type of non-exponential asymptotic stability concept. In this work, we focus on  $\mathcal{O}(t^{-\alpha})$-asymptotic stability, reflecting the algebraic decay of the solutions.

\begin{definition}\label{def.stability}
	The trivial solution of (\ref{sys.gen}) is called \emph{stable} if for any $\varepsilon>0$
	there exists $\delta=\delta(\varepsilon)>0$ such that for every $\mathbf{x}_0\in\mathbb{R}^n$ satisfying $\|\mathbf{x}_0\|<\delta$ we have
	$\|\varphi(t,\mathbf{x}_0)\|\leq\varepsilon$ for any $t\geq 0$.
	
	The trivial solution  of (\ref{sys.gen}) is called \emph{asymptotically stable} if it is stable and t here
	exists $\rho>0$ such that $\lim\limits_{t\rightarrow\infty}\varphi(t,\mathbf{x}_0)=0$ whenever $\|\mathbf{x}_0\|<\rho$.
	
	Let $\alpha>0$. The trivial solution  of (\ref{sys.gen}) is called \emph{$\mathcal{O}(t^{-\alpha})$-asymptotically stable} if it is stable and there exists $\rho>0$ such that for any $\|\mathbf{x}_0\|<\rho$ one has:
	$$\|\varphi(t,\mathbf{x}_0)\|=\mathcal{O}(t^{-\alpha})\quad\textrm{as }t\rightarrow\infty.$$
\end{definition}

\section{Main results}
In this paper, we consider the following two-dimensional linear autonomous incommensurate fractional-order system:
\begin{equation}\label{linearsys}
\left\{
\begin{array}{l}
^c\!D^{q_1}x(t)=a_{11}x(t)+a_{12}y(t) \\
^c\!D^{q_2}y(t)=a_{21}x(t)+a_{22}y(t)
\end{array}
\right.
\end{equation}
where $A=(a_{ij})$ is a real two-dimensional matrix and $q_1,q_2\in(0,1]$ are the fractional orders of the Caputo derivatives. The following characteristic equation is obtained by means of the Laplace transform method:
$$
\det\left(\text{diag}(s^{q_1},s^{q_2})-A\right)=0
$$
which is equivalent to
\begin{equation}\label{eq.char}
s^{q_1+q_2}-a_{11}s^{q_2}-a_{22}s^{q_1}+\det(A)=0.
\end{equation}
It is important to emphasize that in the characteristic equation \eqref{eq.char}, $s^{q_1}$ and $s^{q_2}$ represent the principal values (first branches) of the corresponding complex power functions \cite{Doetsch}.

By means of asymptotic expansion properties and the Final Value Theorem of the Laplace transform \cite{Bonnet_2000,Brandibur_2017,Doetsch}, necessary and sufficient conditions for the global asymptotic stability of system (\ref{linearsys}) have been recently obtained \cite{Brandibur_2018}:
\begin{prop}\label{thm.lin.stab}$ $
	\begin{enumerate}
		\item Denoting $q=\min\{q_1,q_2\}$, system (\ref{linearsys}) is $\mathcal{O}(t^{-q})$-globally asymptotically stable if and only if all the roots of the characteristic equation \eqref{eq.char} are in the open left half-plane.
		\item If $\det(A)\neq 0$ and the characteristic equation \eqref{eq.char} has a root in the open right half-plane, system (\ref{linearsys}) is unstable.
	\end{enumerate}
\end{prop}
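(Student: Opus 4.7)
The plan is to apply the Laplace transform to system \eqref{linearsys} and reduce the stability question to a contour-integral representation of the solution. First I would transform each equation using the standard formula for the Caputo derivative and solve the resulting $2\times 2$ algebraic system by Cramer's rule, obtaining
$$
X(s)=\frac{x_0 s^{q_1-1}(s^{q_2}-a_{22})+a_{12}y_0 s^{q_2-1}}{\Delta(s)},\qquad Y(s)=\frac{a_{21}x_0 s^{q_1-1}+y_0 s^{q_2-1}(s^{q_1}-a_{11})}{\Delta(s)},
$$
where $\Delta(s)$ denotes the left-hand side of \eqref{eq.char}. Each solution component is then recovered via the Bromwich integral, which I would evaluate by deforming onto a Hankel-type contour that wraps the branch cut of $s^{q_1},s^{q_2}$ along the negative real axis. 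Poles of $X,Y$ in the open right half-plane contribute exponential terms $e^{s_0 t}$ through residues, whereas the integral along the cut produces the algebraic, non-exponential part of the solution.

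For the sufficiency direction of statement 1, assuming every zero of $\Delta$ lies in the open left half-plane, the contour deformation leaves only the Hankel contribution. The dominant behaviour as $t\to\infty$ is governed by the expansion of $X(s),Y(s)$ near the branch point $s=0$: the numerators carry factors $s^{q_1-1}$ and $s^{q_2-1}$, while $\Delta(s)$ is bounded away from zero on a suitable neighbourhood of the cut, and Watson-type asymptotics for the resulting integral yield the decay rate $\mathcal{O}(t^{-q})$ with $q=\min\{q_1,q_2\}$. Lyapunov stability then follows at once from linearity. For the converse, a root of $\Delta$ in the open right half-plane injects an exponentially growing mode; a purely imaginary nonzero root yields undamped oscillations incompatible with algebraic decay; and a root at $s=0$ would force $\det(A)=0$, which is likewise incompatible with $\mathcal{O}(t^{-q})$-stability.

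Statement 2 is a refinement of the same analysis. When $\det(A)\neq 0$, the branch point $s=0$ is not a zero of $\Delta$, so any root $s_0$ in the open right half-plane is a genuine isolated pole of $X$ and $Y$, well separated from the cut. Its residue contributes a term $c\,e^{s_0 t}$ to the inverse Laplace transform, with coefficient $c$ depending on $(x_0,y_0)$; the hypothesis $\det(A)\neq 0$ guarantees that the numerators in the Cramer representations above do not vanish simultaneously at $s_0$, so one can select initial data producing $c\neq 0$. The resulting exponential growth precludes stability of the trivial equilibrium.

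The main obstacle I anticipate is the rigorous execution of the contour deformation and the asymptotic analysis of the branch-cut integral. One must treat the principal branches of $s^{q_1}$ and $s^{q_2}$ consistently on both sides of the cut, verify that the contributions from the large and small circular arcs vanish in the relevant limits, and extract a uniform $\mathcal{O}(t^{-q})$ bound for the Hankel integral independent of the direction of approach. The Final Value Theorem and the asymptotic expansion framework developed in \cite{Bonnet_2000,Brandibur_2017,Doetsch} furnish the appropriate tools for carrying these estimates through.
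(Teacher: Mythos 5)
A preliminary remark: the paper does not prove Proposition \ref{thm.lin.stab} itself; it is quoted from \cite{Brandibur_2018}, where the argument is Laplace-transform based but proceeds via asymptotic expansion properties and the Final Value Theorem (in the spirit of \cite{Bonnet_2000}) rather than an explicit Bromwich--Hankel deformation. Your overall strategy is in the same spirit, but as written it contains a step that fails. In statement 2 you claim that $\det(A)\neq 0$ guarantees that the numerators of $X(s)$ and $Y(s)$ cannot vanish simultaneously at a root $s_0$ with $\Re(s_0)>0$ for all initial data; this is false. Take $a_{12}=a_{21}=0$, $a_{11}=2^{q_1}$, $a_{22}=2^{q_2}$: then $\det(A)=2^{q_1+q_2}\neq 0$, $\Delta(s)=(s^{q_1}-a_{11})(s^{q_2}-a_{22})$ has the root $s_0=2$ (of multiplicity two) in the open right half-plane, and both numerators $x_0 s^{q_1-1}(s^{q_2}-a_{22})$ and $y_0 s^{q_2-1}(s^{q_1}-a_{11})$ vanish at $s_0$ for every $(x_0,y_0)$. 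Instability still holds, but only because the cancellation is partial: the coefficients of $x_0,y_0$ in the numerators have at most simple zeros at $s_0$, while $\Delta$ vanishes there to full multiplicity, so $X$ (or $Y$) retains a pole for suitable initial data. A correct proof must compare orders of vanishing (or handle the decoupled case separately); this is precisely the step your justification skips, and the role you assign to the hypothesis $\det(A)\neq0$ is not the one it actually plays.

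The same cancellation issue undermines the ``only if'' half of statement 1, which you assert rather than prove: for a root on the imaginary axis you need both that a pole genuinely survives in $X$ or $Y$ for some initial data, and an Abelian-type argument to conclude, since a solution decaying like $t^{-q}$ with $q<1$ is not absolutely Laplace-integrable on the axis, so ``undamped oscillations incompatible with algebraic decay'' does not follow directly from a residue computation (one should show, e.g., that $x(t)\to 0$ forces $\sigma X(\sigma+i\omega)\to 0$ as $\sigma\to 0^+$, contradicting a nonzero residue). Finally, in the sufficiency direction the work you defer is the actual content: zeros of $\Delta$ lying in the open left half-plane off the cut are crossed by the deformation and could a priori accumulate toward the branch cut, so one must establish the needed finiteness and zero-free neighbourhoods, or bypass the deformation altogether as in \cite{Brandibur_2018} via the Final Value Theorem and the expansion results of \cite{Bonnet_2000}. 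With these repairs your plan would deliver the proposition, but as it stands the key steps are either missing or incorrect.
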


The aim of this paper is to analyze the distribution of the roots of the characteristic equation \eqref{eq.char} with respect to the imaginary axis of the complex plane. In what follows, we denote $\det(A)=\delta$ and we consider the complex-valued function 
$$\Delta(s;a_{11},a_{22},\delta,q_1,q_2)=s^{q_1+q_2}-a_{11}s^{q_2}-a_{22}s^{q_1}+\delta$$
which gives the left-hand side of the characteristic equation \eqref{eq.char}. 

\begin{rem}
The analysis of the roots of the characteristic function $\Delta(s;a_{11},a_{22},\delta,q_1,q_2)$ is also encountered in the investigation of the stability properties of the three-term fractional-order differential equation
\begin{equation}\label{eq.multi.term}
    ^c\!D^{q_1+q_2}x(t)-a_{11}~\! ^c\!D^{q_2}x(t)-a_{22}~\! ^c\!D^{q_1}x(t)+\delta x(t)=0.
\end{equation}
Therefore, the results presented in this paper are also applicable in the framework of equation \eqref{eq.multi.term}.
\end{rem}

The statements of the main results are presented below, followed by detailed proofs in the upcoming sections.

\subsection{Fractional-order-independent stability and instability results}
\label{sec:indep}$ $

Obtaining fractional-order-independent necessary and sufficient conditions for the asymptotic stability or instability of system (\ref{linearsys})  are particularly useful in practical applications where the exact values of the fractional orders used in the mathematical modeling are not  precisely known. In this section, we only state the main results, giving their complete proofs in section \ref{sec:proofs}, due to their complexity.  

\begin{theorem}[Fractional-order independent instability results]\label{thm.instab}
$ $
\begin{itemize}
\item[i.] If $\det(A)<0$, system (\ref{linearsys}) is unstable, regardless of the fractional orders $q_1$ and $q_2$.

\item[ii.] If $\det(A)>0$, system (\ref{linearsys}) is unstable regardless of the fractional orders $q_1$ and $q_2$ if and only if one of the following conditions holds:
$$ 
\begin{cases}
& a_{11}+a_{22}\geq \det(A)+1 \ \textrm{or} \\
&  a_{11}>0,~a_{22}>0,~ a_{11}a_{22}\geq \det(A).
\end{cases}
$$
\end{itemize}
\end{theorem}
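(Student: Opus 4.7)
The common engine for part (i) and the sufficiency direction of part (ii) is the observation that $\Delta(s;a_{11},a_{22},\delta,q_1,q_2)$, regarded as a function of real $s>0$, satisfies $\Delta(s;\cdot)\to+\infty$ as $s\to\infty$ (the leading monomial $s^{q_1+q_2}$ dominates). Consequently, as soon as I can exhibit a point $s_0>0$ at which $\Delta(s_0;\cdot)\le 0$, the intermediate value theorem produces a positive real root of \eqref{eq.char}, which lies in the open right half-plane; since $\delta\neq 0$, Proposition~\ref{thm.lin.stab}(2) then yields instability. The key feature of the locations $s_0$ I will select is that they will be independent of $q_1$ and $q_2$ in an appropriate sense, so that the conclusion becomes uniform in the fractional orders.

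For part (i), letting $s\to 0^+$ kills the three monomial terms and leaves $\Delta(s;\cdot)\to\delta<0$, so any sufficiently small $s_0$ suffices. For the sufficiency in part (ii) I treat the two alternatives separately. If $a_{11}+a_{22}\ge\delta+1$, evaluating at $s_0=1$ gives $\Delta(1;\cdot)=1-a_{11}-a_{22}+\delta\le 0$. If instead $a_{11}>0$, $a_{22}>0$ and $a_{11}a_{22}\ge\delta$, I use the identity
$$\Delta(s;\cdot) \;=\; (s^{q_1}-a_{11})(s^{q_2}-a_{22}) \;+\; (\delta-a_{11}a_{22}),$$
and choose $s_0=a_{11}^{1/q_1}>0$: the first factor vanishes, while the remainder is non-positive by hypothesis, so $\Delta(s_0;\cdot)=\delta-a_{11}a_{22}\le 0$.

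The main obstacle is the necessity direction of part (ii). Assuming $\delta>0$ together with $a_{11}+a_{22}<\delta+1$ and the failure of the second alternative (that is, $a_{11}\le 0$, or $a_{22}\le 0$, or $a_{11}a_{22}<\delta$), I must exhibit at least one pair $(q_1,q_2)\in(0,1]^2$ for which \eqref{eq.char} has no root in the open right half-plane. This is a global statement about the full root distribution rather than a pointwise statement about $\Delta$, so the IVT engine above is no longer enough. My plan here is to invoke the fractional-order-dependent analysis announced for section~\ref{sec:proofs}: parametrise the possible imaginary-axis crossings by $s=i\omega$ with $\omega>0$, separate real and imaginary parts of $\Delta(i\omega;\cdot)=0$, and extract the stability-boundary curves in the $(q_1,q_2)$-square. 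I then intend to show that, under the negated hypotheses, these curves do not cover the whole square, and to pinpoint an explicit stable pair: on the commensurate slice $q_1=q_2=q$ with $q$ sufficiently small in the subcase where the eigenvalues of $A$ are complex conjugate (so that Matignon's criterion is directly available through the substitution $\lambda=s^q$), and on an extreme slice such as $q_1$ small with $q_2=1$ (or vice versa) in the remaining subcases where $A$ has a real eigenvalue structure that commensurate orders cannot stabilise.
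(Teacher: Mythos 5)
Your part (i) and the sufficiency half of part (ii) are exactly the paper's arguments (Lemma \ref{lem.unstable}): $\Delta\to+\infty$ on the positive real axis, $\Delta(0)=\delta<0$ in case (i), the test point $s_0=1$ when $a_{11}+a_{22}\ge\delta+1$, and the factorization $\Delta(s)=(s^{q_1}-a_{11})(s^{q_2}-a_{22})+\delta-a_{11}a_{22}$ with $s_0=a_{11}^{1/q_1}$ in the other case, followed by Proposition \ref{thm.lin.stab}(2). That portion is correct and identical in route to the paper.

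The necessity half of part (ii) is where the proposal has a genuine gap: it is only a plan (``I intend to show\dots''), and the plan as stated would fail. Finding, for each $(a_{11},a_{22})\notin R_u(\delta)$, a single stabilizing pair of orders cannot be done by restricting to the commensurate slice $q_1=q_2=q$ plus ``extreme'' slices with one order small and the other equal to $1$. Concretely, take $\delta=4$, $(a_{11},a_{22})=(5,-0.5)$: this point violates both conditions in (ii) (sum $=4.5<5$, $a_{22}<0$), yet $A$ has two positive real eigenvalues, so by Matignon no commensurate order works; and a direct check of the curves $\Gamma(4,q_1,1)$ and $\Gamma(4,1,q_2)$ (whose limits as the small order tends to $0$ are the corner paths $H_1$, $H_2$ through $(1,\delta)$ and $(\delta,1)$) shows $\phi_{4,q_1,1}(5)$ and $\phi_{4,1,q_2}(5)$ stay far below $-0.5$, so those slices leave the point unstable as well; the stabilizing orders turn out to be pairs with \emph{both} $q_1,q_2$ small in a suitable ratio (e.g.\ $(q_1,q_2)\approx(0.2,0.05)$). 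This is precisely why the paper's necessity proof does not proceed slice by slice: it rests on Lemma \ref{lem.Q}, the two-parameter sweep statement $Q(\delta)=\mathbb{R}^2\setminus\bigl(R_s(\delta)\cup R_u(\delta)\bigr)$, whose proof (critical-point analysis of the intersection abscissa $a_{11}^m$ on $S_\pm$, the boundary limits $H_1$, $H_2$, the parabola $a_{11}a_{22}=\delta$ and the line $a_{11}+a_{22}=\delta+1$, the sequences $M_n$ with $q_2/q_1=\mu$ fixed) is the real content, and which is then combined with Theorem \ref{thm.q} to place the point strictly below some curve $\Gamma(\delta,q_1^*,q_2^*)$. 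Invoking ``the fractional-order-dependent analysis'' alone does not supply this covering statement: Theorem \ref{thm.q} decides stability for \emph{fixed} orders but says nothing about which points are reached as the orders vary. A further, smaller, issue: to contradict ``unstable regardless of the orders'' you must actually establish non-instability (the paper gets asymptotic stability, all roots in the open left half-plane); merely showing ``no root in the \emph{open} right half-plane'' is not by itself the negation of instability, since purely imaginary roots are not excluded by that phrasing.
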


\begin{theorem}[Fractional-order-independent stability results]\label{thm.stab}
System (\ref{linearsys}) is asymptotically stable, regardless of the fractional orders $q_1,q_2\in(0,1]$ if and only if the following inequalities are satisfied:
$$
a_{11}+a_{22}<0<\det(A)\quad \text{and}\quad \max\{a_{11},a_{22}\}<\min\{1,\det(A)\}.
$$
\end{theorem}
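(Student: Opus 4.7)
The plan is to invoke Proposition~\ref{thm.lin.stab} and reduce the statement to showing that every root of the characteristic equation $\Delta(s;a_{11},a_{22},\delta,q_1,q_2)=0$ lies in the open left half-plane, uniformly in $(q_1,q_2)\in(0,1]^2$. The proof then splits naturally into necessity and sufficiency of the stated inequalities, and for sufficiency I expect to lean on the fractional-order-dependent results that the paper develops just before this theorem.

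For necessity, I would specialise $(q_1,q_2)$ to extract the inequalities one at a time. The corner $q_1=q_2=1$ reduces $\Delta$ to the classical quadratic $s^2-(a_{11}+a_{22})s+\delta$, so the Routh--Hurwitz criterion immediately forces $a_{11}+a_{22}<0$ and $\delta>0$. For the remaining condition $\max\{a_{11},a_{22}\}<\min\{1,\delta\}$, I would track a distinguished real root as $q_1\to 0^{+}$ with $q_2=1$: formally, the characteristic equation collapses to the affine relation $(1-a_{11})s+\delta-a_{22}=0$, whose unique root $s^{\ast}=(a_{22}-\delta)/(1-a_{11})$ must remain in the closed left half-plane by continuity of the root loci. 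Combining $\mathrm{Re}(s^{\ast})\leq 0$ with the previously established sign conditions rules out $a_{11}>1$ and forces $a_{22}\leq\delta$. The symmetric limit $q_1=1$, $q_2\to 0^{+}$ produces $a_{11}\leq\delta$ and $a_{22}\leq 1$, and strictness is recovered by noting that equality at a boundary would push a root onto the imaginary axis for nearby interior choices of $(q_1,q_2)$, contradicting stability.

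For sufficiency, assume all the inequalities in the theorem. Since $\Delta(s;\cdot)$ at $(q_1,q_2)=(1,1)$ is already Hurwitz stable, and since $(0,1]^2$ is connected, it suffices to prove that no root of $\Delta$ can cross the imaginary axis as $(q_1,q_2)$ varies. Noting that $\Delta(0;\cdot)=\delta>0$, this reduces to establishing $\Delta(i\omega;a_{11},a_{22},\delta,q_1,q_2)\neq 0$ for every real $\omega\neq 0$ and every admissible pair of fractional orders. Splitting $\Delta(i\omega;\cdot)$ into real and imaginary parts yields two transcendental relations involving $\omega^{q_1},\omega^{q_2}$ and the trigonometric factors $\cos(q_j\pi/2),\sin(q_j\pi/2)$. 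The inequalities $a_{11},a_{22}<\min\{1,\delta\}$ combined with $a_{11}+a_{22}<0<\delta$ are precisely what one needs to prevent the two relations from being simultaneously solvable; the verification would reuse the parametric description of the critical stability curves provided by the fractional-order-dependent theorems preceding this one.

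The main obstacle is the uniform exclusion of imaginary roots required in the sufficiency step. Because $\Delta$ contains two independent fractional powers of $s$, direct algebraic elimination is unavailable, and one must exploit the geometry of the stability boundary traced out in $(a_{11},a_{22},\delta)$-space as $\omega$ and $(q_1,q_2)$ vary. Checking that the hypothesised triple lies strictly inside every such boundary, uniformly in $(q_1,q_2)\in(0,1]^2$, is the delicate technical core of the argument, and this is where I expect the bulk of the work to go.
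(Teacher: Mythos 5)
Your proposal has genuine gaps in both directions. For \emph{sufficiency}, the entire analytic content is missing: you correctly reduce to showing $\Delta(i\omega;a_{11},a_{22},\delta,q_1,q_2)\neq 0$ for all real $\omega$ and all $(q_1,q_2)\in(0,1]^2$, but then only assert that the hypotheses ``are precisely what one needs'' and defer the verification as ``the delicate technical core''. That verification \emph{is} the theorem. The paper does it by a short direct argument (Lemma \ref{lem.stable}) that needs no homotopy in the orders: if $s_0=re^{i\theta}$, $\theta\in[0,\pi/2]$, were an unstable root, dividing the characteristic equation by $s_0^{q_1}$ and taking real parts gives
$a_{22}\geq\min\{1,-a_{11},\delta\}\left[r^{q_2}\cos(q_2\theta)+r^{q_2-q_1}\cos((q_2-q_1)\theta)+r^{-q_1}\cos(q_1\theta)\right]$,
and the elementary inequality \eqref{ineq} (proved in Appendix A.2) bounds the bracket below by $1$, contradicting $a_{22}<\min\{1,-a_{11},\delta\}$ on $R_s(\delta)$. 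Nothing equivalent appears in your sketch; moreover, your proposed no-crossing argument in $(q_1,q_2)$ would additionally require uniform bounds on the unstable roots and a Rouch\'e-type continuity of the root count with respect to the \emph{orders} (the analogue of Lemma \ref{lemma.number.roots}, which the paper only establishes with respect to $(a_{11},a_{22})$), none of which you supply.

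For \emph{necessity}, your degenerate limits are an attractive elementary alternative to the paper's route (which goes through Lemma \ref{lem.Q}, i.e.\ the fact that the curves $\Gamma(\delta,q_1,q_2)$ sweep out exactly $\mathbb{R}^2\setminus(R_s(\delta)\cup R_u(\delta))$, combined with Theorem \ref{thm.q}). Made rigorous via locally uniform convergence of $s^{q_1}\to 1$ and Hurwitz/Rouch\'e, they do yield $a_{11}+a_{22}<0<\delta$ and $\max\{a_{11},a_{22}\}\leq\min\{1,\delta\}$, since a limiting root $s^\ast>0$ forces a root in the open right half-plane for small $q_1$ and hence instability by Proposition \ref{thm.lin.stab}. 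But strictness is a genuine gap, not a footnote: exactly on the boundary your limit degenerates ($a_{11}=1$ annihilates the coefficient of $s$ in the limiting affine equation, and $a_{11}=\delta<1$ produces a limiting root at $s=0$), so no information is extracted there; and your fallback claim that a root pushed onto the imaginary axis for nearby orders ``contradicts stability'' is unsupported: Proposition \ref{thm.lin.stab} gives instability only for roots in the \emph{open} right half-plane, and a purely imaginary root only rules out $\mathcal{O}(t^{-q})$-asymptotic stability, not plain asymptotic stability. To eliminate, say, $a_{11}=1$, $a_{22}<-1$, $\delta\geq 1$, you must exhibit orders for which a root has strictly positive real part; the paper obtains this from Lemma \ref{lem.Q} together with the transversal root-crossing statement of Theorem \ref{thm.q}(iii), and nothing in your sketch replaces that step.
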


\begin{rem}
In the classical integer order case (i.e. $q_1=q_2=1$), it is well-known that a two-dimensional linear autonomous system of the form $\textbf{x}'=A\mathbf{x}$, with constant matrix $A\in\mathbb{R}^{2\times 2}$ is asymptotically stable if and only if $\textrm{Tr}(A)<0$ and $\det(A)>0$. Based on Theorem \ref{thm.stab}, a supplementary inequality $$\max\{a_{11},a_{22}\}<\min\{1,\det(A)\}\}$$ is required to guarantee that system \eqref{linearsys} is asymptotically stable, regardless of the choice of fractional orders $q_1,q_2\in(0,1]$.
\end{rem}

Based on the previous theorems, as the case $\det(A)<0$ is trivial (i.e. system \eqref{linearsys} is unstable for any $q_1,q_2\in(0,1]$), in what follows, we consider $\det(A)=\delta>0$ and we define the following regions in the $(a_{11},a_{22})$-plane:
\begin{align*}
 R_u(\delta)&\!\!=\!\!\{(a_{11},a_{22})\in\mathbb{R}^2: a_{11}+a_{22}\geq \delta+1 \ \textrm{or}\  a_{11}>0,~a_{22}>0,~ a_{11}a_{22}\geq \delta\}\\
 R_s(\delta)&\!\!=\!\!\{(a_{11},a_{22})\in\mathbb{R}^2: a_{11}+a_{22}<0\ \text{and}\ \max\{a_{11},a_{22}\}<\min\{1,\delta\}\}
\end{align*} 
An example is presented for the particular case $\delta=4$ in Figure \ref{fig.regiuni}.

\begin{figure}[htbp]
\centering
\includegraphics*[width=0.46\linewidth]{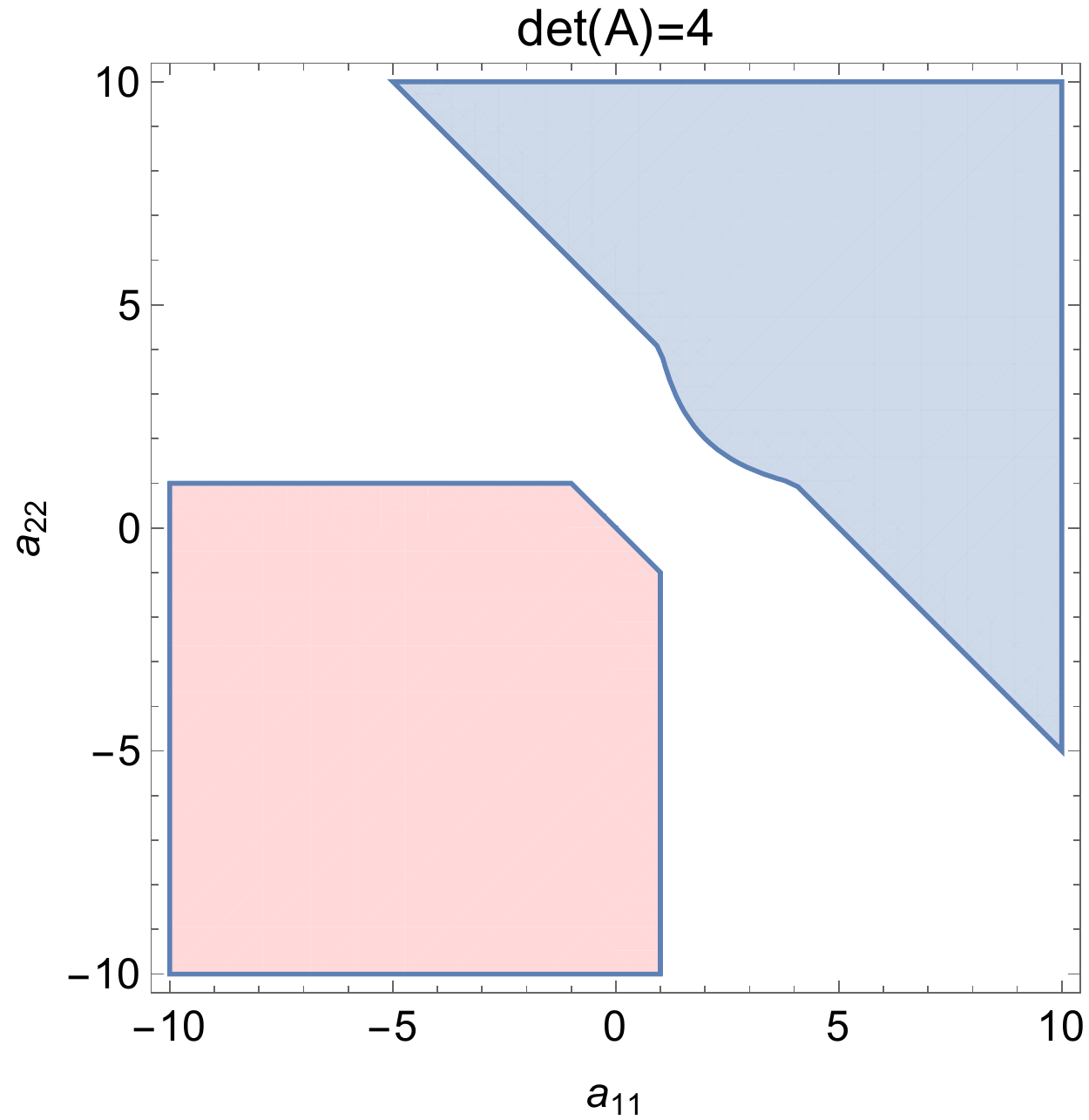}
\caption{The red/blue shaded regions represent the sets $R_s(\delta)$ and $R_u(\delta)$, respectively, for $\delta=\det(A)=4$.}
\label{fig.regiuni}
\end{figure}

\begin{rem}
Due to Theorem \ref{thm.instab}, when $\det(A)=\delta>0$ is arbitrarily fixed, system \eqref{linearsys} is unstable for any choice of the fractional orders $q_1,q_2\in(0,1]$ if and only if $(a_{11},a_{22})\in R_u(\delta)$. On the other hand, based on Theorem \ref{thm.stab}, system \eqref{linearsys} is asymptotically stable for any $q_1,q_2\in(0,1]$ if and only if $(a_{11},a_{22})\in R_s(\delta)$. Therefore, if $(a_{11},a_{22})\in\mathbb{R}^2\setminus(R_s(\delta)\cup R_u(\delta))$ (e.g. white region in Fig. \ref{fig.regiuni}), the stability properties of system \eqref{linearsys} depend on the considered fractional orders.     
\end{rem}

\subsection{Fractional-order-dependent stability and instability results}
$ $

The aim of this section is to characterize the stability properties of system
\eqref{linearsys} when $\det(A)=\delta>0$ and $q_1,q_2\in(0,1]$ are arbitrarily fixed. The case $\delta<0$ is not considered here, as from Theorem \ref{thm.instab} we know that in this case, system \eqref{linearsys} is unstable for any $q_1,q_2\in(0,1]$.

\begin{lemma}\label{lem.curve.gamma}
	Let $\delta>0$, $q_1,q_2\in(0,1]$ and consider the smooth parametric curve in the $(a_{11},a_{22})$-plane defined by
	$$
	\Gamma(\delta,q_1,q_2)~:\quad
	\begin{cases}
		a_{11}=\delta^{\frac{q_1}{q_1+q_2}}~h(\omega,q_1,q_2)\\
        a_{22}=\delta^{\frac{q_2}{q_1+q_2}}~h(-\omega,q_1,q_2)
	\end{cases},\quad \omega\in\mathbb{R},
	$$
	where:
    $$h(\omega,q_1,q_2)=
    \begin{cases}
    \rho_2(q_1,q_2)e^{q_1\omega}-\rho_1(q_1,q_2)e^{-q_2\omega}, &\text{if }q_1\neq q_2\\
    \cos\frac{q\pi}{2}-\omega, &\text{if }q_1=q_2:=q
    \end{cases}$$
with the functions $\rho_1(q_1,q_2)$ and $\rho_2(q_1,q_2)$ defined for $q_1\neq q_2$ as
$$\rho_k(q_1,q_2)=\frac{\sin\frac{q_k\pi}{2}}{\sin\frac{(q_2-q_1)\pi}{2}}\quad,\quad\text{for }k=\overline{1,2}.$$
The following statements hold:
\begin{itemize}	
\item[i.] The curve $\Gamma(\delta,q_1,q_2)$ is the graph of a smooth, decreasing, concave bijective function $\phi_{\delta,q_1,q_2}:\mathbb{R}\rightarrow\mathbb{R}$ in the $(a_{11},a_{22})$-plane.
\item[ii.] The curve $\Gamma(\delta,q_1,q_2)$ lies outside the third quadrant of the $(a_{11},a_{22})$-plane.
\end{itemize}
\end{lemma}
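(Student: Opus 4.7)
The plan is to exploit the parametric form directly, running standard first- and second-derivative tests for monotonicity and concavity, and to verify the ``no third quadrant'' claim by translating $a_{11}<0$ and $a_{22}<0$ into elementary exponential inequalities in $\omega$. At the outset I reduce to the generic case $q_1\neq q_2$ and assume, without loss of generality, that $q_2>q_1$, using the symmetry $h(\omega,q_2,q_1)=h(-\omega,q_1,q_2)$ --- which follows from $\rho_k(q_2,q_1)=-\rho_{3-k}(q_1,q_2)$ --- to swap the two coordinates together with the sign of $\omega$. Under $q_2>q_1$ both $\rho_1$ and $\rho_2$ are strictly positive. The diagonal case $q_1=q_2=:q$ I dispatch by inspection: the parametrization becomes the affine line $a_{11}+a_{22}=2\sqrt{\delta}\cos\tfrac{q\pi}{2}$ of slope $-1$, which is a decreasing (weakly concave) bijection and lies outside the third quadrant since $\cos\tfrac{q\pi}{2}\ge 0$ for $q\in(0,1]$.

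For part~(i) in the generic case, I differentiate the parametrization:
$$a_{11}'(\omega)=\delta^{\tfrac{q_1}{q_1+q_2}}\bigl(q_1\rho_2 e^{q_1\omega}+q_2\rho_1 e^{-q_2\omega}\bigr)>0,$$
$$a_{22}'(\omega)=-\delta^{\tfrac{q_2}{q_1+q_2}}\bigl(q_1\rho_2 e^{-q_1\omega}+q_2\rho_1 e^{q_2\omega}\bigr)<0.$$
Combined with $a_{11}(\omega)\to\pm\infty$ as $\omega\to\pm\infty$, this shows that $\omega\mapsto a_{11}$ is a smooth strictly increasing bijection $\mathbb{R}\to\mathbb{R}$; hence $\Gamma(\delta,q_1,q_2)$ is the graph of a smooth function $\phi_{\delta,q_1,q_2}:\mathbb{R}\to\mathbb{R}$ with $\phi'=a_{22}'/a_{11}'<0$, so strictly decreasing, and bijective since $a_{22}(\omega)\to\pm\infty$ as $\omega\to\mp\infty$. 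Concavity of $\phi$ reduces, since $a_{11}'>0$, to verifying $a_{11}'a_{22}''-a_{22}'a_{11}''\le 0$, and a direct expansion yields
$$a_{11}'a_{22}''-a_{22}'a_{11}''=2\delta\bigl[(q_1^3\rho_2^2-q_2^3\rho_1^2)+q_1q_2(q_1-q_2)\rho_1\rho_2\cosh\bigl((q_1+q_2)\omega\bigr)\bigr].$$
Because $q_1<q_2$ and $\rho_1\rho_2>0$, the coefficient of $\cosh$ is negative, so the bracket is maximized at $\omega=0$, where it factorizes as $(q_1\rho_2+q_2\rho_1)(q_1^2\rho_2-q_2^2\rho_1)$. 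The first factor is clearly positive; the sign of the second is where I expect the \emph{main obstacle}, as I need $q_2^2\sin\tfrac{q_1\pi}{2}>q_1^2\sin\tfrac{q_2\pi}{2}$, i.e., that $u(q):=\sin(q\pi/2)/q^2$ is strictly decreasing on $(0,1]$. Setting $x=q\pi/2$, this reduces to $x\cos x-2\sin x<0$ on $(0,\pi/2]$, which holds because the expression vanishes at $x=0$ and has derivative $-\cos x-x\sin x<0$.

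Part~(ii) is then a short computation. With $\rho_1,\rho_2>0$, the condition $a_{11}<0$ is equivalent to $e^{(q_1+q_2)\omega}<\rho_1/\rho_2$, while $a_{22}<0$ is equivalent to $e^{-(q_1+q_2)\omega}<\rho_1/\rho_2$; multiplying the two would force $1<(\rho_1/\rho_2)^2$, i.e., $\sin(q_1\pi/2)>\sin(q_2\pi/2)$, contradicting $q_1<q_2$. Hence no value of $\omega$ places the curve in the (open) third quadrant, and together with the $q_1=q_2$ observation recorded in the first paragraph this completes the proof.
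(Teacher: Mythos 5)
Your proof is correct and follows essentially the same route as the paper: monotonicity and bijectivity from the explicit derivatives of the parametrization, concavity from the same second-derivative expression (your bracket is exactly the numerator the paper computes), and the same multiplicative contradiction for the third-quadrant claim. The only cosmetic difference is in the concavity endgame, where you bound $\cosh\geq 1$ and use the monotonicity of $\sin x/x^2$, while the paper uses $q_1/q_2\leq\rho_1/\rho_2\leq 1$ via the monotonicity of $\sin x/x$; both are valid.
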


\begin{proof} Let $\delta>0$ and $q_1,q_2\in(0,1]$ arbitrarily fixed. 

\emph{Proof of statement (i).}  The real-valued function $\omega\mapsto h(\omega,q_1,q_2)$ is bijective and monotonous on $\mathbb{R}$: strictly decreasing if $q_1\leq q_2$ and strictly increasing otherwise. Therefore, the particular form of the parametric equations  implies that the curve $\Gamma(\delta,q_1,q_2)$ is the graph of a smooth decreasing bijective function $\phi_{\delta,q_1,q_2}:\mathbb{R}\rightarrow\mathbb{R}$ in the $(a_{11},a_{22})$-plane. 

If $q_1\neq q_2$, using the chain rule, we compute: 
$$\dfrac{d^2a_{22}}{da_{11}^2}=\delta^{\frac{q_2-2q_1}{q_1+q_2}}\cdot\dfrac{\rho_1\rho_2q_1 q_2(q_1-q_2)\left[e^{(q_1+q_2)\omega}+e^{-(q_1+q_2)\omega}\right]+2(q_1^3 \rho_2^2-q_2^3 \rho_1^2)}{\left(q_1\rho_2e^{q_1\omega}+q_2\rho_1 e^{-q_2\omega}\right)^3}$$
Assuming that $q_1<q_2$, the expression above is strictly negative, as $\rho_1>0$, $\rho_2>0$ and $\frac{q_1}{q_2}\leq \frac{\rho_1}{\rho_2}\leq 1$ (since the function $x\mapsto\frac{\sin x}{x}$ is decreasing on $(0,\pi)$). A similar argument holds in the case $q_1>q_2$ as well. Hence, $\phi_{\delta,q_1,q_2}$ is a concave function. 

\emph{Proof of statement (ii).} Assume the contrary, i.e. that there exists $(a_{11},a_{22})\in\Gamma(\delta,q_1,q_2)$ such that $a_{11}<0$ and $a_{22}<0$, or equivalently, that there exists $\omega\in\mathbb{R}$ such that $h(\pm\omega,q_1,q_2)<0$. As the case $q_1=q_2$ is trivial, we assume without loss of generality that $q_1<q_2$. The inequalities $h(\pm\omega,q_1,q_2)<0$ are equivalent to
$$\rho_2(q_1,q_2)e^{\pm(q_1+q_2)\omega}<\rho_1(q_1,q_2)$$
which leads to $\rho_2(q_1,q_2)<\rho_1(q_1,q_2)$, or equivalently to $q_2<q_1$, which is absurd. Hence, the curve $\Gamma(\delta,q_1,q_2)$ does not have any points in the third quadrant.
\end{proof}

\begin{rem}\label{rem.q1.q2.q}
If $q_1=q_2:=q$, $\Gamma(\delta,q_1,q_2)$ represents the straight line: $$a_{11}+a_{22}=2\sqrt{\delta}\cos\dfrac{q\pi}{2}.$$
\end{rem}

In the following, we will denote by $N(a_{11},a_{22},\delta,q_1,q_2)$ the number of unstable roots ($\Re(s)\geq 0$) of the characteristic function $\Delta(s;a_{11},a_{22},\delta,q_1,q_2)$, including their multiplicities. The following lemma shows that the function $N(a_{11},a_{22},\delta,q_1,q_2)$ is well-defined and establishes important properties which will be useful in the proof of the main results. 

\begin{lemma}\label{lemma.number.roots} Let $\delta>0$, $q_1,q_2\in(0,1]$ be arbitrarily fixed. The following statements hold:
\begin{itemize}
\item[i.] The characteristic function $\Delta(s;a_{11},a_{22},\delta,q_1,q_2)$ has at most a finite number of roots satisfying $\Re(s)\geq 0$. 

\item[ii.]  The function $(a_{11},a_{22})\mapsto N(a_{11},a_{22},\delta,q_1,q_2)$ is continuous at all points $(a_{11},a_{22})$ that do not belong to the curve $\Gamma(\delta,q_1,q_2)$. Consequently, $N(a_{11},a_{22},\delta,q_1,q_2)$ is constant on each connected component of $\mathbb{R}^2\setminus \Gamma(\delta,q_1,q_2)$.
\end{itemize}
\end{lemma}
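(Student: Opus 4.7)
The plan is to prove (i) via analyticity and a growth estimate, and (ii) via a standard argument-principle continuity argument, with the essential geometric input being that $\Gamma(\delta,q_1,q_2)$ is exactly the locus of parameters for which $\Delta$ admits a root on the imaginary axis.

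For (i), I would first observe that $\Delta$ is holomorphic on the slit plane $\mathbb{C}\setminus(-\infty,0]$ because $s^{q_1}$, $s^{q_2}$, $s^{q_1+q_2}$ denote principal branches; in particular $\Delta$ is analytic on the closed right half-plane. The leading term $s^{q_1+q_2}$ dominates at infinity, and the elementary estimate
$$|\Delta(s;a_{11},a_{22},\delta,q_1,q_2)|\geq |s|^{q_1+q_2}-|a_{11}|\,|s|^{q_2}-|a_{22}|\,|s|^{q_1}-\delta$$
becomes positive outside some disk of radius $R_0>0$. Thus the zero set of $\Delta$ in $\{\Re(s)\geq 0\}$ is bounded; being the zero set of a nonconstant analytic function, it is also discrete, hence finite.

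For (ii), the crucial first step is to show that $\Delta$ admits a purely imaginary zero if and only if $(a_{11},a_{22})\in\Gamma(\delta,q_1,q_2)$. Since $\Delta(0)=\delta>0$ and the coefficients are real, it suffices to analyze $\Delta(i\omega')=0$ for $\omega'>0$. Separating real and imaginary parts yields a $2\times 2$ linear system in $(a_{11},a_{22})$ whose coefficient determinant equals $-(\omega')^{q_1+q_2}\sin\tfrac{(q_2-q_1)\pi}{2}$; for $q_1\neq q_2$, Cramer's rule combined with the reparametrization $\omega'=\delta^{1/(q_1+q_2)}e^{\omega}$ and the sine-difference identities should reproduce exactly the parametric formulae defining $\Gamma$, with the scalars $\rho_1,\rho_2$ and the function $h$ emerging naturally. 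The degenerate case $q_1=q_2$ is handled directly and recovers the straight line noted in Remark~\ref{rem.q1.q2.q}.

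Once this characterization is in place, I would fix $(a_{11}^*,a_{22}^*)\notin\Gamma(\delta,q_1,q_2)$. By the characterization and the growth estimate from (i), the function $\omega'\mapsto|\Delta(i\omega';a_{11}^*,a_{22}^*,\delta,q_1,q_2)|$ is continuous, strictly positive, and tends to infinity as $|\omega'|\to\infty$, hence admits a positive lower bound. Joint continuity of $\Delta$ in all its arguments then lets me choose $R$ large and a neighborhood $U$ of $(a_{11}^*,a_{22}^*)$ such that $\Delta$ is bounded uniformly away from zero on the closed semicircular contour $C_R$ formed by the segment $[-iR,iR]$ and the right semicircle $|s|=R$, $\Re(s)\geq 0$, and such that every right-half-plane zero of $\Delta$ remains strictly inside $C_R$ for every parameter in $U$. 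The argument principle then expresses $N(a_{11},a_{22},\delta,q_1,q_2)$ as an integer-valued contour integral depending continuously on $(a_{11},a_{22})\in U$; it is therefore locally constant, and hence constant on each connected component of $\mathbb{R}^2\setminus\Gamma(\delta,q_1,q_2)$. The main obstacle, as I see it, is the algebraic verification that the Cramer's-rule solution matches precisely the parametric description of $\Gamma$; the remaining topological arguments are routine adaptations of classical root-counting results for holomorphic functions on a slit plane.
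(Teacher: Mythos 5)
Your overall route coincides with the paper's: statement (i) from analyticity of $\Delta$ on the slit plane $\mathbb{C}\setminus(-\infty,0]$ together with a growth bound, and statement (ii) by first identifying $\Gamma(\delta,q_1,q_2)$ as the exact locus of parameters for which a purely imaginary root exists (your Cramer's-rule computation, including the determinant $-(\omega')^{q_1+q_2}\sin\frac{(q_2-q_1)\pi}{2}$, is precisely the computation the paper performs for Theorem \ref{thm.q}(i), which does not rely on this lemma, so there is no circularity) and then a Rouch\'e/argument-principle local-constancy argument on a contour enclosing all unstable roots. So the approach is essentially the paper's; however, two steps are not valid exactly as written and need repair.

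First, in (i), ``bounded and discrete, hence finite'' is not a correct implication by itself: a bounded discrete set can be infinite if the zeros accumulate at a point outside the domain of analyticity. In the closed right half-plane the only such point is the branch point $s=0$, so you must add that $\Delta(s)\to\delta>0$ as $s\to 0$, which forbids accumulation there; this is exactly why the paper's Appendix A.1 also produces a \emph{lower} bound on $|s|$ in \eqref{ineq.s} and why the limit $s_0$ in its Bolzano--Weierstrass argument is nonzero. Second, in (ii), your contour $C_R$ passes through $s=0$, where $\Delta$ is continuous but not analytic (and $\Delta'\sim s^{q-1}$ is unbounded), so the argument principle cannot be invoked verbatim on $C_R$. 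Either indent the contour around the origin (using $\Delta(0)=\delta>0$), or do as the paper does: confine all unstable roots, uniformly for parameters in a neighborhood of $(a_{11}^*,a_{22}^*)$, to an annular region $\{\Re(s)>0,\ m<|s|<M\}$ with $m>0$ and apply Rouch\'e on its boundary, which stays away from the branch point. For that uniform confinement the paper uses the monotone bounds $l_{\delta,q_1,q_2}$, $L_{\delta,q_1,q_2}$ of Appendix A.1 as functions of $\|a\|_p$; your compactness/joint-continuity remark can substitute for the upper bound, but the lower bound on $|s|$ must be made explicit as above. With these two repairs your argument is sound and matches the paper's proof in substance.
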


\begin{proof}
The first step of the proof (see Appendix A.1.) consists of showing that there exist a strictly decreasing function $l_{\delta,q_1,q_2}:\mathbb{R}^+\rightarrow\mathbb{R}^+$ and a strictly increasing function $L_{\delta,q_1,q_2}:\mathbb{R}^+\rightarrow\mathbb{R}^+$ such that any unstable root of $\Delta(s;a_{11},a_{22},\delta,q_1,q_2)$ is bounded by
\begin{equation}\label{ineq.s}
    l_{\delta,q_1,q_2}(\|a\|_p)\leq |s|\leq L_{\delta,q_1,q_2}(\|a\|_p)
\end{equation}
where $p=\frac{q_1+q_2}{2\min\{q_1,q_2\}}\geq 1$ and $a=(a_{11},a_{22})$. Moreover, $\|\cdot\|_p$ denotes the $p$-norm in $\mathbb{R}^2$.

\emph{Proof of statement (i).} Assuming the contrary, that there exists an infinite number of unstable roots, the Bolzano-Weierstrass theorem implies that there exists a convergent sequence of unstable roots $(s_j)$ with the limit $s_0\neq 0$ (since $\delta>0$), such that $\Re(s_0)\geq 0$. As the function $\Delta(s;a_{11},a_{22},\delta,q_1,q_2)$ is analytic in $\mathbb{C}\setminus\mathbb{R}^-$, by the principle of permanence it follows that it  is identically zero, which is absurd. Therefore, we obtain that $N(a_{11},a_{22},\delta,q_1,q_2)$ is finite.

\emph{Proof of statement (ii).}
Let $a^0=(a^0_{11},a^0_{22})\in\mathbb{R}^2\setminus \Gamma(\delta,q_1,q_2)$ and consider $r>0$ such that the open neighborhood $B_r(a^0)=\{a=(a_{11},a_{22})\in\mathbb{R}^2: \|a-a^0\|_p<r\}$ of the point $a^0$ is included in $\mathbb{R}^2\setminus \Gamma(\delta,q_1,q_2)$. 

For any $a=(a_{11},a_{22})\in B_r(a^0)$, we have:
$$\|a\|_p\leq \|a-a^0\|_p+\|a^0\|_p<r+\|a^0\|_p$$
and hence, inequality \eqref{ineq.s} implies that any root $s$ of $\Delta(s;a_{11},a_{22},\delta,q_1,q_2)$ such that $\Re(s)\geq 0$ satisfies:
$$ l_{\delta,q_1,q_2}(r+\|a^0\|_p)< |s|< L_{\delta,q_1,q_2}(r+\|a^0\|_p).$$
Denoting $m=l_{\delta,q_1,q_2}(r+\|a^0\|_p)$ and  $M=L_{\delta,q_1,q_2}(r+\|a^0\|_p)$, let us consider in the complex plane the simple closed curve $(\gamma)$, oriented counterclockwise, bounding the open set
$$D=\{s\in\mathbb{C}:\Re(s)> 0,m<|s|< M\}.$$
The above construction shows that for any $a=(a_{11},a_{22})\in B_r(a^0)$ all unstable roots of $\Delta(s;a_{11},a_{22},\delta,q_1,q_2)$ are inside the open set $D$. 

As $\Delta(s;a^0_{11},a^0_{22},\delta,q_1,q_2)\neq 0$ for any $s\in(\gamma)$, it is easy to see that $$d_0=\min\limits_{s\in(\gamma)}|\Delta(s;a^0_{11},a^0_{22},\delta,q_1,q_2)|>0.$$ 
Moreover, we consider $q\geq 1$ such that $\frac{1}{p}+\frac{1}{q}=1$ and denote:  $$r'=\min\left \{r,\frac{d_0}{\|(M^{q_2},M^{q_1})\|_q}\right\}.$$ 
Based on H\"{o}lder's inequality, it follows that for any $s\in (\gamma)$ and for any $a\in B_{r'}(a^0)\subset B_r(a^0)$, we have:
\begin{align*}
|\Delta & (s;a_{11},a_{2},\delta,q_1,q_2)-\Delta(s;a^0_{11},a^0_{22},\delta,q_1,q_2)|=\\
&=|(a_{11}-a^0_{11})s^{q_2}+(a_{22}-a^0_{22})s^{q_1}|\leq\\ &\leq|a_{11}-a^0_{11}|M^{q_2}+|a_{22}-a^0_{22}|M^{q_1}\leq\\
&\leq \|a-a^0\|_p\|(M^{q_2},M^{q_1})\|_q<\\
&<r'\|(M^{q_2},M^{q_1})\|_q\leq d_0\leq |\Delta(s;a^0_{11},a^0_{22},\delta,q_1,q_2)|.
\end{align*}

Rouch\'{e}'s theorem implies  $\Delta(s;a_{11},a_{22},\delta,q_1,q_2)$ and $\Delta(s;a^0_{11},a^0_{22},\delta,q_1,q_2)$ have the same number of roots in the domain $D$, 
and hence $$N(a_{11},a_{22},\delta,q_1,q_2)=N(a^0_{11},a^0_{22},\delta,q_1,q_2)\quad \text{for any}~~ a\in B_{r'}(a^0).$$ Therefore, the function
$(a_{11},a_{22})\mapsto N(a_{11},a_{22},\delta,q_1,q_2)$ is continuous on $\mathbb{R}^2\setminus\Gamma(\delta,q_1,q_2)$, and as it is integer-valued, it follows that it is constant on each connected component of $\mathbb{R}^2\setminus\Gamma(\delta,q_1,q_2)$.
\end{proof}

The following theorem represents the main result characterizing  fractional-order-dependent stability and instability properties of system (\ref{linearsys}). 

\begin{theorem}[Fractional-order-dependent stability and instability results]\label{thm.q}
Let $\det(A)=\delta>0$ and $q_1,q_2\in(0,1]$ arbitrarily fixed. Consider the curve $\Gamma(\delta,q_1,q_2)$ and the function $\phi_{\delta,q_1,q_2}:\mathbb{R}\rightarrow\mathbb{R}$ given by Lemma \ref{lem.curve.gamma}.
\begin{itemize}
\item[i.] The characteristic equation \eqref{eq.char} has a pair of pure imaginary roots if and only if $(a_{11},a_{22})\in \Gamma(\delta,q_1,q_2)$.
			
\item[ii.] System (\ref{linearsys}) is $\mathcal{O}(t^{-q})$-asymptotically stable (with $q=\min\{q_1,q_2\}$) if and only if $$a_{22}<\phi_{\delta,q_1,q_2}(a_{11}).$$

\item[iii.] If $a_{22}>\phi_{\delta,q_1,q_2}(a_{11})$, system (\ref{linearsys}) is unstable.
\end{itemize}
\end{theorem}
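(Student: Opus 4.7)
The plan is to establish part (i) by direct substitution and a convenient reparametrization, and then to derive parts (ii) and (iii) from the constancy of $N$ on the connected components of $\mathbb{R}^{2}\setminus\Gamma(\delta,q_1,q_2)$ (Lemma \ref{lemma.number.roots}), by evaluating $N$ at one well-chosen point inside each component.

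For part (i), since the coefficients of $\Delta$ are real, pure imaginary roots occur in complex-conjugate pairs, so it suffices to seek $\omega>0$ with $\Delta(i\omega;a_{11},a_{22},\delta,q_1,q_2)=0$. Using principal branches, $(i\omega)^{q_k}=\omega^{q_k}[\cos(q_k\pi/2)+i\sin(q_k\pi/2)]$; separating real and imaginary parts yields a $2\times 2$ linear system in $(a_{11},a_{22})$ whose determinant equals $-\omega^{q_1+q_2}\sin((q_2-q_1)\pi/2)$. When $q_1\ne q_2$, Cramer's rule combined with the sine-difference identity gives
$$a_{11}=\omega^{q_1}\rho_2(q_1,q_2)-\delta\,\omega^{-q_2}\rho_1(q_1,q_2),\qquad a_{22}=\delta\,\omega^{-q_1}\rho_2(q_1,q_2)-\omega^{q_2}\rho_1(q_1,q_2),$$
and the substitution $\omega=\delta^{1/(q_1+q_2)}e^{\tau}$ with $\tau\in\mathbb{R}$ transforms these into the parametric equations of $\Gamma(\delta,q_1,q_2)$ from Lemma \ref{lem.curve.gamma}. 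The degenerate case $q_1=q_2=q$ requires a separate treatment: the characteristic equation becomes a quadratic in $s^{q}$, and the imaginary-part equation forces $a_{11}+a_{22}=2\sqrt{\delta}\cos(q\pi/2)$, recovering the line in Remark \ref{rem.q1.q2.q}.

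For parts (ii) and (iii), Lemma \ref{lem.curve.gamma} ensures that $\Gamma(\delta,q_1,q_2)$ is the graph of a bijection $\phi_{\delta,q_1,q_2}:\mathbb{R}\to\mathbb{R}$, so $\mathbb{R}^{2}\setminus\Gamma(\delta,q_1,q_2)$ consists of the two connected components $\mathcal{B}=\{a_{22}<\phi_{\delta,q_1,q_2}(a_{11})\}$ and $\mathcal{A}=\{a_{22}>\phi_{\delta,q_1,q_2}(a_{11})\}$, on each of which $N$ is constant. To evaluate $N$ on $\mathcal{B}$, I would take any point with $a_{11},a_{22}<0$ (which belongs to $\mathcal{B}$ by Lemma \ref{lem.curve.gamma}(ii)): for $s=\rho e^{i\theta}$ with $\rho>0$ and $\theta\in[-\pi/2,\pi/2]$, the imaginary part
$$\Im\Delta(s)=\rho^{q_1+q_2}\sin((q_1+q_2)\theta)+(-a_{11})\rho^{q_2}\sin(q_2\theta)+(-a_{22})\rho^{q_1}\sin(q_1\theta)$$
has the strict sign of $\theta$ for $\theta\ne 0$, since all three sine factors share the sign of $\theta$ (their arguments all lie in $(-\pi,\pi)$) and the coefficients $-a_{11},-a_{22}$ are strictly positive, while at $\theta=0$ one has $\Re\Delta(\rho)>0$ termwise. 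Hence $N=0$ throughout $\mathcal{B}$, and Proposition \ref{thm.lin.stab}(1) yields the \emph{if} direction of (ii).

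To evaluate $N$ on $\mathcal{A}$, I would choose $(M,M)$ with $M>0$ large; because $\phi_{\delta,q_1,q_2}(M)\to-\infty$ as $M\to\infty$, this point lies in $\mathcal{A}$ for sufficiently large $M$. Restricted to positive real $c$, the function $c\mapsto c^{q_1+q_2}-M(c^{q_1}+c^{q_2})+\delta$ equals $\delta>0$ at $c=0^{+}$, equals $1-2M+\delta<0$ at $c=1$ once $M>(\delta+1)/2$, and tends to $+\infty$ as $c\to\infty$, so the intermediate value theorem produces at least two positive real roots of $\Delta$, giving $N\ge 2$ on $\mathcal{A}$. By part (i), $\mathcal{A}$ contains no roots on the imaginary axis, so these unstable roots satisfy $\Re(s)>0$ strictly; Proposition \ref{thm.lin.stab}(2) then yields (iii), and the \emph{only if} direction of (ii) follows as well. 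I expect the main delicate step to be part (i), where one must carefully track the principal branches and verify that the exponential substitution bijectively covers the whole curve (and handle $q_1=q_2$ separately); the arguments for (ii) and (iii) then reduce to elementary sign analysis and the intermediate value theorem.
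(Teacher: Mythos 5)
Your proposal is correct. For part (i) and the \emph{if} half of (ii) it follows essentially the paper's route: the same real/imaginary-part system solved for $(a_{11},a_{22})$ (the paper parametrizes the imaginary root as $i\delta^{1/(q_1+q_2)}e^{\omega}$ from the outset, you apply the exponential substitution afterwards), and the same strategy of fixing a reference point in the third quadrant, checking $N=0$ there, and propagating by Lemma \ref{lemma.number.roots} (the paper takes $a_{11}=a_{22}=-1$ and divides the equation by $s^{q_1}$ to get termwise positive real parts; your sign analysis of $\Im\Delta$ at an arbitrary point with $a_{11},a_{22}<0$ plays the same role). Where you genuinely diverge is part (iii): the paper proves instability above $\Gamma(\delta,q_1,q_2)$ by a transversality computation, differentiating the critical root with respect to $a_{11}$ and $a_{22}$, showing that $\nabla\Re(s)$ is an outward normal along $\Gamma(\delta,q_1,q_2)$, and concluding that a conjugate pair crosses into the open right half-plane, so that $N=2$ above the curve; you instead evaluate $N$ at the single test point $(M,M)$ with $M$ large, where the intermediate value theorem (values $\delta>0$ near $c=0$, $1-2M+\delta<0$ at $c=1$, and $+\infty$ at infinity) yields two positive real roots, and then use constancy of $N$ on the upper component together with part (i) (no imaginary-axis roots off $\Gamma(\delta,q_1,q_2)$, and $s=0$ excluded since $\delta>0$) so that Proposition \ref{thm.lin.stab}(2) applies. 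Your route is shorter and avoids the implicit-differentiation computation; what it gives up is the finer conclusion of the paper's proof (the exact count $N=2$ and the crossing direction), which is not needed for the theorem nor for its later uses in the fractional-order-independent results. Three small points to tighten: in your sign argument the first sine can vanish at the endpoint ($q_1=q_2=1$, $\theta=\pm\pi/2$), so the strict sign of $\Im\Delta$ should be attributed to the terms with coefficients $-a_{11},-a_{22}>0$; in the case $q_1=q_2=q$ it is the two equations together (the imaginary part gives $a_{11}+a_{22}=2\omega^{q}\cos\frac{q\pi}{2}$ and the real part then forces $\omega^{q}=\sqrt{\delta}$) that produce the line of Remark \ref{rem.q1.q2.q}; and for the \emph{only if} half of (ii) you should state explicitly that the boundary case $(a_{11},a_{22})\in\Gamma(\delta,q_1,q_2)$ is excluded by part (i) combined with Proposition \ref{thm.lin.stab}(1).
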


\begin{proof} Assume that $\delta>0$ and $q_1,q_2\in(0,1]$ are arbitrarily fixed.

\emph{Proof of statement (i).} 
It is easy to see that the characteristic equation \eqref{eq.char} has a pair of pure imaginary roots if and only if there exists $\omega\in\mathbb{R}$ such that $\Delta(i\delta^{\frac{1}{q_1+q_2}}e^\omega; a_{11},a_{22},\delta,q_1,q_2)=0$. As $i^q=\cos \frac{q\pi}{2}+i\sin\frac{q\pi}{2}$, taking the real and the imaginary parts of the previous equation, one obtains:
\begin{equation}\label{sistemrealimaginar}
\!\!\begin{cases}
	a_{11}\delta^{-\frac{q_1}{q_1+q_2}}e^{q_2\omega}\cos\frac{q_2\pi}{2}+a_{22}\delta^{-\frac{q_2}{q_1+q_2}}e^{q_1\omega}\cos \frac{q_1\pi}{2}=e^{(q_1+q_2)\omega}\cos\frac{(q_1+q_2)\pi}{2}+1\\	
a_{11}\delta^{-\frac{q_1}{q_1+q_2}}e^{q_2\omega}\sin\frac{q_2\pi}{2}+a_{22}\delta^{-\frac{q_2}{q_1+q_2}}e^{q_1\omega}\sin \frac{q_1\pi}{2}=e^{(q_1+q_2)\omega}\sin\frac{(q_1+q_2)\pi}{2}
	\end{cases}
	\end{equation}

If $q_1\neq q_2$, solving this system for $a_{11}$ and $a_{22}$ shows that the characteristic equation \eqref{eq.char} has a pair of pure imaginary roots if and only if $(a_{11},a_{22})$ belongs to the curve $\Gamma(\delta,q_1,q_2)$ given by Lemma \ref{lem.curve.gamma}.

In the particular case $q_1=q_2:=q$, system \eqref{sistemrealimaginar} is compatible if and only if $\omega=0$. Moreover, the set of solutions of \eqref{sistemrealimaginar} is the straight line 
$$a_{11}+a_{22}=2\sqrt{\delta}\cos\frac{q\pi}{2}$$
which represents $\Gamma(\delta,q,q)$ given by Lemma \ref{lem.curve.gamma} (see Remark \ref{rem.q1.q2.q}).

\emph{Proof of statement (ii).} 
Choosing $a_{11}=a_{22}=-1$, we argue that $\Delta(s;-1,-1,\delta,q_1,q_2)$ does not have any roots in the right half plane. Indeed, assuming that there exists $s\in\mathbb{C}$ such that $\Re(s)\geq 0$ and 
$$s^{q_1+q_2}+s^{q_2}+s^{q_1}+\delta=0,$$
it follows by division by $s^{q_1}$ that 
$$s^{q_2}+s^{q_2-q_1}+1+\delta s^{-q_1}=0.$$
As $q_2\in(0,1]$, $q_2-q_1\in[-1,1]$ and $-q_1\in[-1,0)$, it follows that the real part of each term from the left hand side of the above equality is positive, which leads to a contradiction. Hence, $N(-1,-1,\delta,q_1,q_2)=0$. From Lemma \ref{lem.curve.gamma} (ii) and Lemma \ref{lemma.number.roots} it follows that $N(a_{11},a_{22},\delta,q_1,q_2)=0$, for any $(a_{11},a_{22})$ from the region below the curve $\Gamma(\delta,q_1,q_2)$, which leads to the desired conclusion.


\emph{Proof of statement (iii).}
Let $s(a_{11},a_{22},\delta,q_1,q_2)$ denote the root of $\Delta(s;a_{11},a_{22},\delta,q_1,q_2)$ satisfying $s(a^\star_{11},a^\star_{22},\delta,q_1,q_2)=i\beta$, with $\beta=\delta^{\frac{1}{q_1+q_2}}e^\omega$ as in the proof of statement (i), where $(a^\star_{11},a^\star_{22})\in\Gamma(\delta,q_1,q_2)$. Taking the derivative with respect to $a_{11}$ in the equation	$$s^{q_1+q_2}-a_{11}s^{q_2}-a_{22}s^{q_1}+\delta=0$$
we obtain
	$$(q_1+q_2)s^{q_1+q_2-1}\frac{\partial s}{\partial a_{11}}-s^{q_2}-a_{11}q_2s^{q_2-1}\frac{\partial s}{\partial a_{11}}-a_{22}q_1s^{q_1-1}\frac{\partial s}{\partial a_{11}}=0.$$
We deduce:
	$$\frac{\partial s}{\partial a_{11}}=\frac{s^{q_2}}{(q_1+q_2)s^{q_1+q_2-1}-a_{11}q_2s^{q_2-1}-a_{22}q_1s^{q_1-1}}$$
and therefore
	$$\frac{\partial \Re(s)}{\partial a_{11}}=\Re \left(\frac{\partial s}{\partial a_{11}}\right)=\Re\left( \frac{s^{q_2}}{(q_1+q_2)s^{q_1+q_2-1}-a_{11}q_2s^{q_2-1}-a_{22}q_1s^{q_1-1}} \right).$$
We have
$$
	\frac{\partial \Re(s)}{\partial a_{11}}\Big|_{(a^\star_{11},a^\star_{22})}=\Re \left( \frac{\left(i\beta\right)^{q_2}}{P(i\beta)} \right)=\beta^{q_2}\Re \left( \frac{i^{q_2}\overline{P(i\beta)}}{|P(i\beta)|^2} \right)=\frac{\beta^{q_2}}{|P(i\beta)|^2}\cdot \Re\left(i^{q_2}\overline{P(i\beta)}\right)
$$
where $P(s)=(q_1+q_2)s^{q_1+q_2-1}-a^\star_{11}q_2s^{q_2-1}-a^\star_{22}q_1s^{q_1-1}$. A simple computation leads to 
\begin{align*}\frac{\partial \Re(s)}{\partial a_{11}}\Big|_{(a^\star_{11},a^\star_{22})}&=
\delta^{\frac{q_2}{q_1+q_2}}\cdot\frac{\beta^{q_1+q_2-1}}{|P(i\beta)|^2}\cdot\sin\frac{(q_2-q_1)\pi}{2}\left(q_2\rho_1e^{q_2\omega}+q_1\rho_2e^{-q_1\omega}\right)= \\
&=\delta^{\frac{q_2}{q_1+q_2}}\cdot\frac{\beta^{q_1+q_2-1}}{|P(i\beta)|^2}\cdot\sin\frac{(q_2-q_1)\pi}{2}\cdot\frac{\partial h}{\partial\omega}(-\omega,q_1,q_2).
\end{align*}
In a similar way, we compute $\displaystyle\frac{\partial \Re(s)}{\partial a_{22}}\Big|_{(a^\star_{11},a^\star_{22})}$ and we finally obtain the gradient vector     
\begin{align*}\nabla &\Re(s)(a^\star_{11},a^\star_{22})=\left(\frac{\partial\Re(s)}{\partial a_{11}},\frac{\partial\Re(s)}{\partial a_{22}}\right)\bigg\rvert_{(a^\star_{11},a^\star_{22})}=\\&=\frac{\beta^{q_1+q_2-1}}{|P(i\beta)|^2}\cdot\sin\frac{(q_2-q_1)\pi}{2}\cdot\left(\delta^{\frac{q_2}{q_1+q_2}}\frac{\partial h}{\partial\omega}(-\omega,q_1,q_2),\delta^{\frac{q_1}{q_1+q_2}}\frac{\partial h}{\partial\omega}(\omega,q_1,q_2)\right).
\end{align*} 
From the parametric equations of the curve $\Gamma(\delta,q_1,q_2)$ and the properties of the function $h$ it is easy to deduce that 
the gradient vector $\nabla \Re(s)(a^\star_{11},a^\star_{22})$ is in fact a normal vector to the curve  $\Gamma(\delta,q_1,q_2)$ that points outward from the region below the curve. We deduce that the following transversality condition is fulfilled for the directional derivative:
$$\nabla_{\overline{u}} \Re(s)(a^\star_{11},a^\star_{22})=\left\langle\nabla \Re(z)(a^\star_{11},a^\star_{22}),\overline{u}\right\rangle>0,
$$
for any vector $\overline{u}$ which points outward from the region below the curve $\Gamma(\delta,q_1,q_2)$. Therefore, as the parameters $(a_{11},a_{22})$ cross the curve $\Gamma(\delta,q_1,q_2)$ into the region above the curve, $\Re(s)$ becomes positive and the pair of conjugated roots $(s,\overline{s})$ crosses the imaginary axis from the open left half-plane to the open right half-plane. Hence, $N(a_{11},a_{22},\delta,q_1,q_2)= 2$ for any $(a_{11},a_{22})$ from the region above the curve $\Gamma(\delta,q_1,q_2)$, and the system \eqref{sys.gen} is unstable.
\end{proof}

\begin{rem}
In Fig. \ref{fig.curves.frac}, several curves $\Gamma(\delta,q_1,q_2)$ have been plotted for $\delta=4$, $q_1=0.6$ and $q_2\in(0,1]$, together with the fractional-order-independent stability regions $R_s(\delta)$, $R_u(\delta)$. The regions below and above each curve represent the asymptotic stability region and instability region, respectively, provided by Theorem \ref{thm.q}. Lighter shades of red and blue have been used to plot the parts of these regions for which system \eqref{linearsys} is asymptotically stable / unstable for the particular values of the fractional orders $q_1,q_2$ which have been chosen, but not for all $(q_1,q_2)\in(0,1]$.
\end{rem}

\begin{figure}[htbp]
    \centering
    \includegraphics*[width=0.82\linewidth]{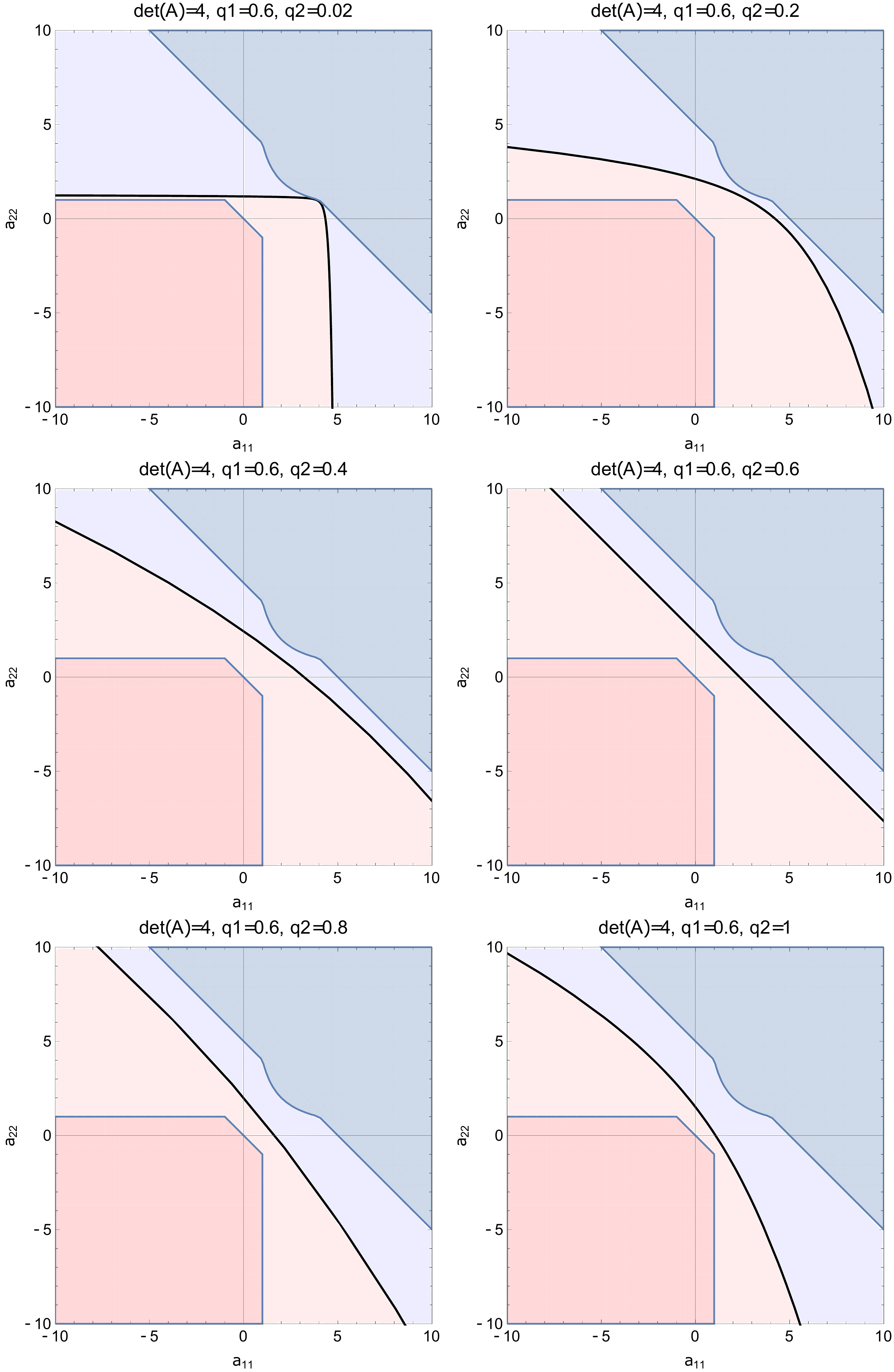}
    \caption{Curves $\Gamma(\delta,q_1,q_2)$ (black) for $\delta=4$, $q_1=0.6$ and  $q_2\in\{0.02,0.2,0.4,0.6,0.8,1\}$. Shades of red / blue represent the asymptotic stability / instability regions, lighter shades being associated to fractional-order-dependent regions and darker shades to fractional-order-independent regions $R_s(\delta)$ and $R_u(\delta)$. }
    \label{fig.curves.frac}
\end{figure}

Theorem \ref{thm.q} gives a relatively simple algebraic criterion (in the form of inequalities involving the elements of the system's matrix and the fractional orders) that permits to immediately decide the question of asymptotic stability or instability for a given two-dimensional system of fractional  differential equations.

\begin{ex}\label{ex}
We consider the system 
\begin{equation}\label{example}
\left\{\!
\begin{array}{l}
^c\!D^{q_1}x(t)=a_{11}x(t)+a_{12}y(t) \\
^c\!D^{q_2}y(t)=a_{21}x(t)+a_{22}y(t)
\end{array}
\right. \text{with }A=(a_{ij})=\begin{pmatrix}
    0.00001 & 1 \\
    -0.0022 & 0.1
\end{pmatrix}
\end{equation}
where $q_1,q_2\in(0,1]$. 

We first verify if the  fractional-order-independent stability or instability conditions given in Theorem \ref{thm.instab} and Theorem \ref{thm.stab} hold. First, as $\det(A)=0.002201>0$, a simple computation shows that $a_{11}+a_{22}<\det(A)+1$ and $a_{11}a_{22}<\det(A)$. Hence, based on Theorem \ref{thm.instab}, we deduce that system \eqref{example} is not unstable regardless of the considered fractional orders $q_1$ and $q_2$. On the other hand, as $a_{11}+a_{22}>0$, it is clear from Theorem \ref{thm.stab} that system \eqref{example} is not asymptotically stable regardless of the considered fractional orders $q_1$ and $q_2$. In other words, the stability and instability of system \eqref{example} depend on the choice of the fractional orders $q_1$ and $q_2$, as shown in the following situations.

\textbf{Case 1.} The special case $(q_1,q_2)=(\frac{1}{2},\frac{1}{4})$ has been considered in \cite{diethelm2017asymptotic}, and it has been shown, by transforming the corresponding system to a system of three fractional differential equations with the same order $\frac{1}{4}$, that in this particular case, system \eqref{example} is globally asymptotically stable. 

Indeed, applying Theorem \ref{thm.q}, we deduce that system \eqref{example} with the fractional orders $(q_1,q_2)=(\frac{1}{2},\frac{1}{4})$ is asymptotically stable if and only if 
$$a_{22}<\phi_{\delta,q_1,q_2}(a_{11}),$$
where $a_{11}=0.00001$, $a_{22}=0.1$, $\delta=\det(A)=0.002201$ and, relying on the notations introduced in Lemma \ref{lem.curve.gamma}:
\[\phi_{\delta,q_1,q_2}(a_{11})=\delta^{\frac{q_2}{q_1+q_2}}h(-\omega^*,q_1,q_2)\]
where $\omega^*$ is the unique root of the equation 
\[a_{11}=\delta^{\frac{q_1}{q_1+q_2}}h(\omega^*,q_1,q_2).\]
From this algebraic equation, we numerically compute $\omega^*=0.818108$ and therefore, we also deduce $\phi_{\delta,q_1,q_2}(a_{11})=0.208493$. As $a_{22}=0.1$, it can be easily seen that the asymptotic stability condition $a_{22}<\phi_{\delta,q_1,q_2}(a_{11})$ is satisfied (as depicted in Fig. \ref{fig:example}). 

\textbf{Case 2.} We now consider a different special case: $(q_1,q_2)=(\frac{1}{4},\frac{1}{2})$. Following the same steps as in the previous case, using Theorem \ref{thm.q}, we now compute: $\phi_{\delta,q_1,q_2}(a_{11})=0.0271274$ and therefore, as $a_{22}=0.1$, it follows that $a_{22}>\phi_{\delta,q_1,q_2}(a_{11})$, and hence, system \eqref{example} is unstable (as shown in Fig. \ref{fig:example}). 

This can also be verified by the method proposed in \cite{diethelm2017asymptotic}. Indeed, for $(q_1,q_2)=(\frac{1}{4},\frac{1}{2})$, system \eqref{example} is equivalent to the following system of three fractional differential equations of the same order $q=\frac{1}{4}$: 
\begin{equation}\label{ex.3d}
    ^c\!D^{q}\mathbf{z}(t)=B\mathbf{z}(t)\quad\text{with }B=\begin{pmatrix}
   a_{11} & a_{12} & 0 \\
   0 & 0 & 1 \\
    a_{21} & a_{22} & 0 
    \end{pmatrix}=
    \begin{pmatrix}
   0.00001 & 1 & 0 \\
   0 & 0 & 1 \\
 -0.0022 & 0.1 & 0 
    \end{pmatrix}
\end{equation}
The spectrum of eigenvalues of the matrix $B$ is \[\sigma(B)=\{-0.326701, 0.304593, 0.0221182\}\]
and hence, based on Matignon's theorem \cite{Matignon}, as $\arg(\lambda_2)=\arg(\lambda_3)=0$, it follows that system \eqref{ex.3d} is unstable.

In conclusion, it is easy to see from the previously considered cases that system \eqref{example} will be asymptotically stable for some pairs of fractional orders (such as $(q_1,q_2)=(\frac{1}{2},\frac{1}{4})$ considered in Case 1.), while it will be unstable for other pairs of fractional orders (such as $(q_1,q_2)=(\frac{1}{4},\frac{1}{2})$ considered in Case 2.). In fact, using the inequality provided by Theorem \ref{thm.q}, the region of all pairs of fractional orders $(q_1,q_2)\in(0,1]^2$ for which system \eqref{example} is asymptotically stable can be numerically computed (see Fig. \ref{fig.q1.q2}).  

\begin{figure}[htbp]
    \centering
    \includegraphics[width=0.9\linewidth]{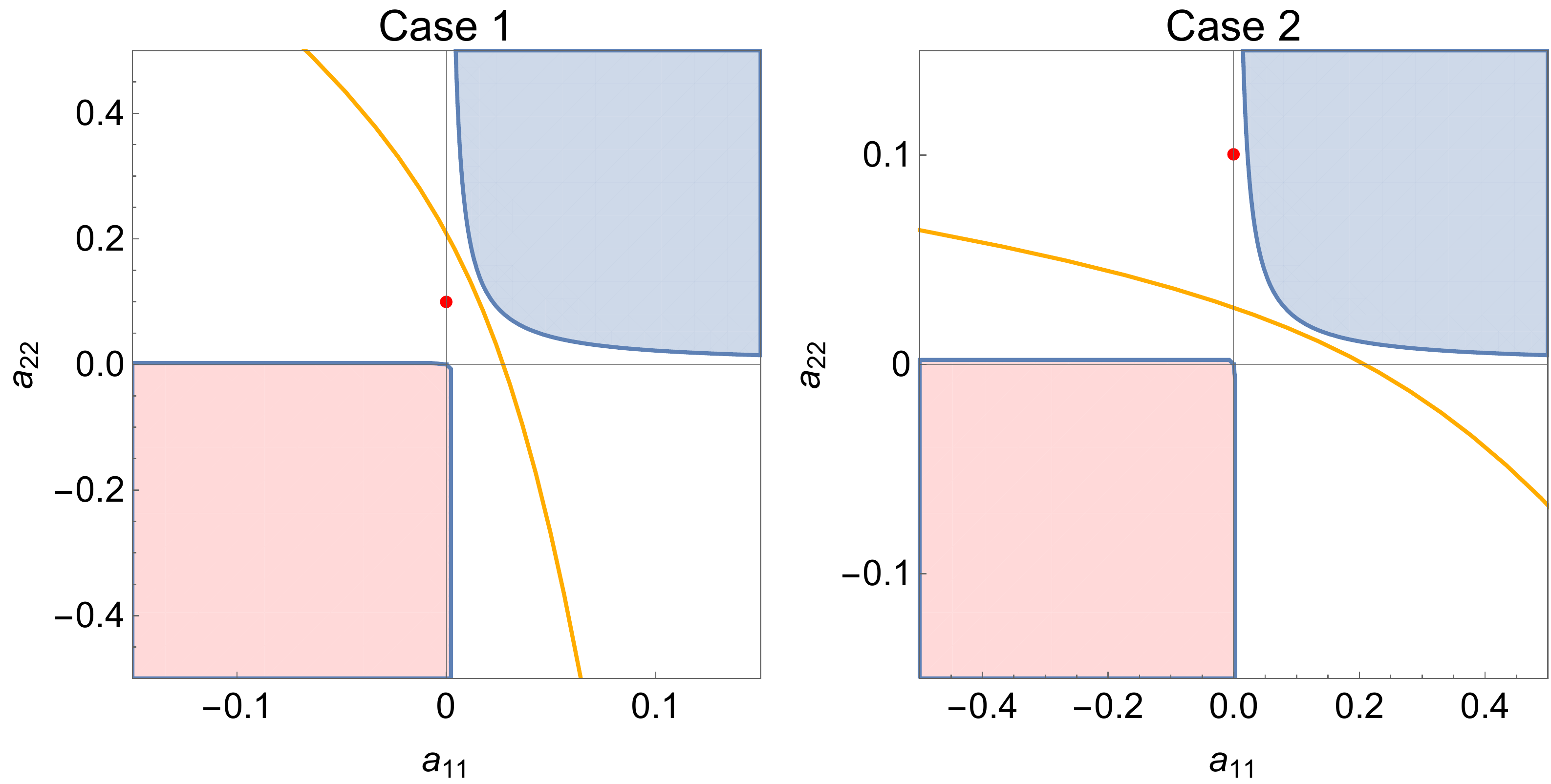}
    \caption{The position of the point $(a_{11},a_{22})=(0.00001,0.1)$ (plotted in red) with respect to the curve $\Gamma(\delta,q_1,q_2)$ (shown in orange) in the particular cases: Case~1:~ $(q_1,q_2)=(\frac{1}{2},\frac{1}{4})$ (left) and Case~2:~ $(q_1,q_2)=(\frac{1}{4},\frac{1}{2})$ (right) considered in Example \ref{ex}. }
    \label{fig:example}
\end{figure}

\begin{figure}[htbp]
    \centering
    \includegraphics[width=0.4\linewidth]{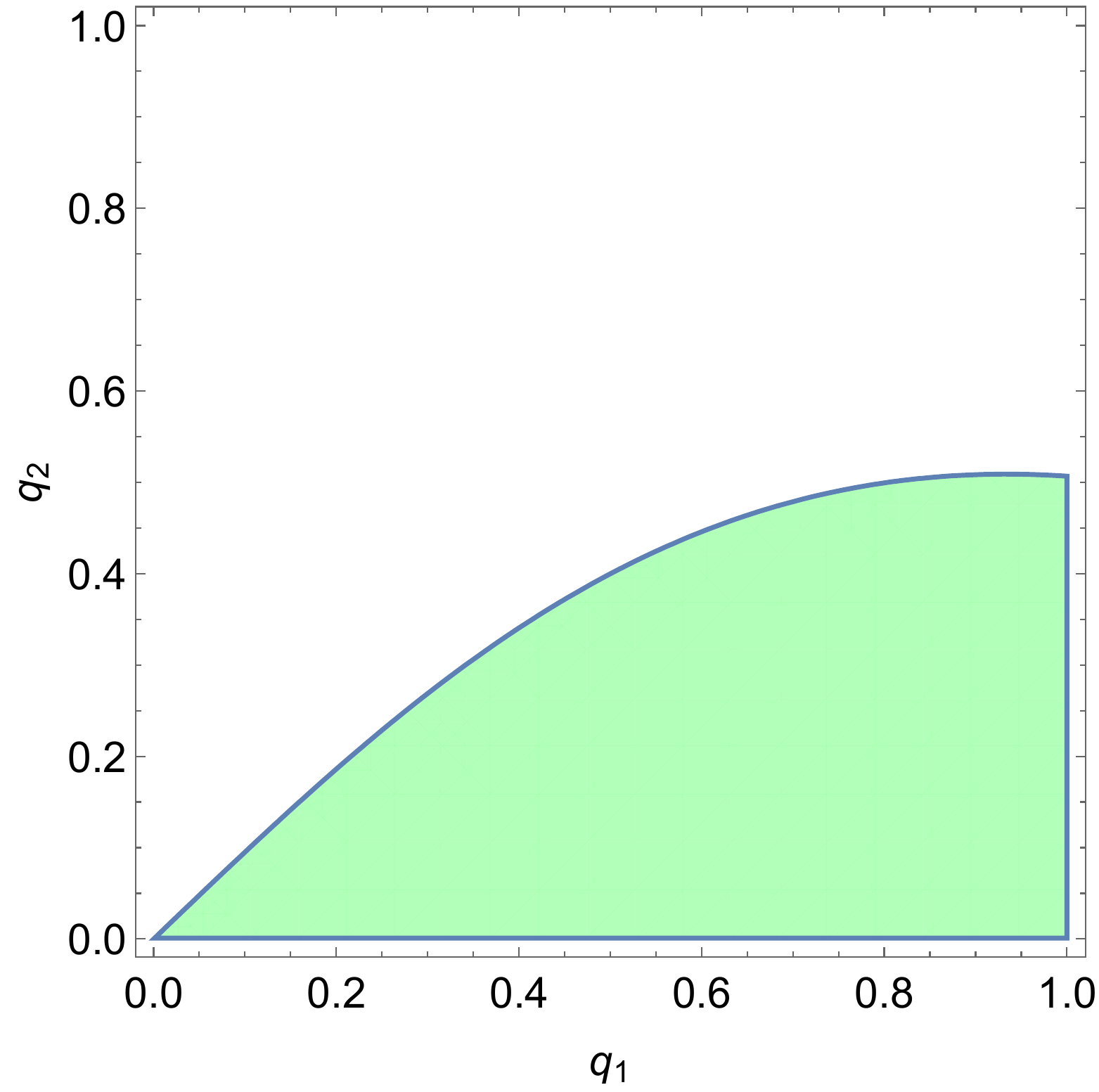}
    \caption{The region of fractional orders $(q_1,q_2)$ for which  system \eqref{example} is globally asymptotically stable.}
    \label{fig.q1.q2}
\end{figure}
\end{ex}

\subsection{Proofs of the fractional-order independent stability and instability results}\label{sec:proofs}
$ $

We are now ready to prove the main results presented in section \ref{sec:indep}. Throughout this section, we assume $\det(A)=\delta>0$, unless stated otherwise and we use the notations $R_u(\delta)$ and $R_s(\delta)$ introduced in section \ref{sec:indep} for the instability and stability regions, respectively. 

The following lemma provides a sufficient result for the instability of system \eqref{linearsys}, regardless of the fractional orders $q_1$ and $q_2$.
\begin{lemma} \label{lem.unstable}
If $(a_{11},a_{22})\in R_u(\delta)$, then system \eqref{linearsys} is unstable, regardless of the fractional orders $q_1$ and $q_2$.
\end{lemma}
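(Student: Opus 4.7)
The plan is to exhibit a positive real root of the characteristic function in each of the two cases defining $R_u(\delta)$, and then invoke Proposition \ref{thm.lin.stab}(2) (applicable since $\delta>0$) to conclude instability. The cornerstone is the algebraic identity
\[
\Delta(s;a_{11},a_{22},\delta,q_1,q_2)=(s^{q_1}-a_{11})(s^{q_2}-a_{22})+(\delta-a_{11}a_{22}),
\]
obtained by expanding the product on the right. For $s>0$ real, the principal branches of $s^{q_1}$ and $s^{q_2}$ are real, so $\Delta(\cdot)$ restricts to a continuous real-valued function on $(0,\infty)$. Since the leading term $s^{q_1+q_2}$ dominates, $\Delta(s)\to+\infty$ as $s\to+\infty$, so it will suffice to exhibit a point $s_0>0$ at which $\Delta(s_0)\leq 0$; the intermediate value theorem then produces a real root $s^\star\geq s_0>0$, which lies in the open right half-plane.

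In the first case $a_{11}+a_{22}\geq\delta+1$, I would simply evaluate at $s_0=1$, obtaining
\[
\Delta(1;a_{11},a_{22},\delta,q_1,q_2)=1-a_{11}-a_{22}+\delta\leq 0,
\]
so IVT furnishes a real root $s^\star\geq 1$. In the second case $a_{11}>0$, $a_{22}>0$, $a_{11}a_{22}\geq\delta$, I would use the factored form and evaluate at $s_0=a_{11}^{1/q_1}>0$, which makes the first factor vanish and yields
\[
\Delta(s_0;a_{11},a_{22},\delta,q_1,q_2)=\delta-a_{11}a_{22}\leq 0,
\]
again producing a positive real root via IVT. Either way, the system has a root $s^\star>0$ in the open right half-plane, so Proposition \ref{thm.lin.stab}(2) delivers instability, for all $q_1,q_2\in(0,1]$.

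There is no real obstacle here: the entire argument is driven by the factorization identity and the IVT, and the equality cases $a_{11}+a_{22}=\delta+1$ or $a_{11}a_{22}=\delta$ are harmless because the test point $s_0$ itself is then already a positive real (and hence unstable) root. The only small care needed is to note that everything is real-valued for $s>0$ so that the IVT actually applies, which is immediate from the fact that the principal branches of fractional powers are real on the positive axis.
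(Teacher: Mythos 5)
Your proposal is correct and follows essentially the same route as the paper's proof: test at $s=1$ in the first case, use the factorization $\Delta(s)=(s^{q_1}-a_{11})(s^{q_2}-a_{22})+\delta-a_{11}a_{22}$ with $s_0=a_{11}^{1/q_1}$ in the second, and conclude via the intermediate value theorem and Proposition \ref{thm.lin.stab}. The only difference is presentational (you spell out the IVT details and the borderline equality cases slightly more explicitly).
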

\begin{proof}
Let $(a_{11},a_{22})\in R_u(\delta)$ and $(q_1,q_2)\in(0,1]^2$ arbitrarily fixed. We will show that the characteristic function $\Delta(s;a_{11},a_{22},\delta,q_1,q_2)$ has at least one positive real root. 

First, it is easy to see that $\Delta(s;a_{11},a_{22},\delta,q_1,q_2)\rightarrow\infty$ as $s\rightarrow\infty$.

On one hand, let us notice that if $a_{11}+a_{22}\geq \delta+1$, it follows that $$\Delta(1;a_{11},a_{22},\delta,q_1,q_2)=1-a_{11}-a_{22}+\delta\leq 0.$$ 
Hence, the function $s\mapsto \Delta(s;a_{11},a_{22},\delta,q_1,q_2)$ has at least one positive real root in the interval $[1,\infty)$. Therefore, the system \eqref{linearsys} is unstable. 

On the other hand, if $a_{11}>0$, $a_{22}>0$ and $a_{11}a_{22}\geq \delta$, as 
$$\Delta(s;a_{11},a_{22},\delta,q_1,q_2)=(s^{q_1}-a_{11})(s^{q_2}-a_{22})+\delta-a_{11}a_{22}$$
we see that for $s_0=(a_{11})^{1/q_1}>0$, we have $$\Delta(s_0;a_{11},a_{22},\delta,q_1,q_2)=\delta-a_{11}a_{22}\leq 0.$$ Hence, the function $s\mapsto \Delta(s;a_{11},a_{22},\delta,q_1,q_2)$ has at least one strictly positive real root. It follows that system \eqref{linearsys} is unstable. 
\end{proof}

The following lemma provides a sufficient result for the asymptotic stability of system \eqref{linearsys}, regardless of the fractional orders $q_1$ and $q_2$.
\begin{lemma}\label{lem.stable}
If $(a_{11},a_{22})\in R_s(\delta)$ then system \eqref{linearsys} is asymptotically stable, regardless of the fractional orders $q_1$ and $q_2$.
\end{lemma}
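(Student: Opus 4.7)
The plan is to derive this lemma from the curve characterization of Theorem \ref{thm.q} combined with the continuity of the root count established in Lemma \ref{lemma.number.roots}. First I would observe that $R_s(\delta)$ is the intersection of the three open half-planes $\{a_{11}+a_{22}<0\}$, $\{a_{11}<\min\{1,\delta\}\}$ and $\{a_{22}<\min\{1,\delta\}\}$, hence convex and in particular path-connected. The ``test point'' $(-1,-1)$ clearly belongs to $R_s(\delta)$, and the explicit computation carried out in the proof of Theorem \ref{thm.q}(ii) already shows $N(-1,-1,\delta,q_1,q_2)=0$ for every $q_1,q_2\in(0,1]$.

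The entire argument then reduces to showing that, for every $q_1,q_2\in(0,1]$, the curve $\Gamma(\delta,q_1,q_2)$ is disjoint from $R_s(\delta)$. Granted this, $R_s(\delta)\subset\mathbb{R}^2\setminus\Gamma(\delta,q_1,q_2)$ lies in a single connected component of the complement; by Lemma \ref{lemma.number.roots}(ii) the function $N(\cdot,\cdot,\delta,q_1,q_2)$ is constant there, and since the value at $(-1,-1)$ is $0$, we obtain $N\equiv 0$ on $R_s(\delta)$, which by Proposition \ref{thm.lin.stab}(1) yields $\mathcal{O}(t^{-q})$-asymptotic stability.

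For the disjointness, the case $q_1=q_2=q$ is immediate: by Remark \ref{rem.q1.q2.q} the curve $\Gamma$ is the line $a_{11}+a_{22}=2\sqrt{\delta}\cos\frac{q\pi}{2}\ge 0$, which misses $\{a_{11}+a_{22}<0\}\supset R_s(\delta)$. For $q_1\ne q_2$ (WLOG $q_1<q_2$) I would work with the parametric form: by Lemma \ref{lem.curve.gamma} the maps $\omega\mapsto a_{11}(\omega)$ and $\omega\mapsto a_{22}(\omega)$ are strictly monotone in opposite directions; at $\omega=0$ one has $a_{11}(0)=\delta^{q_1/(q_1+q_2)}(\rho_2-\rho_1)>0$ and $a_{22}(0)=\delta^{q_2/(q_1+q_2)}(\rho_2-\rho_1)>0$, so $a_{11}(0)+a_{22}(0)>0$; and the dominant terms $-\delta^{q_2/(q_1+q_2)}\rho_1 e^{q_2\omega}$ (resp.\ $-\delta^{q_1/(q_1+q_2)}\rho_1 e^{-q_2\omega}$) force $a_{11}(\omega)+a_{22}(\omega)\to-\infty$ as $\omega\to+\infty$ (resp.\ $\omega\to-\infty$). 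Let $\omega_+>0$ be the smallest positive zero of this sum (so the sum is positive on $[0,\omega_+)$) and $\omega_-<0$ the largest negative one. The crucial claim is that $a_{11}(\omega_+)\ge\min\{1,\delta\}$ and $a_{22}(\omega_-)\ge\min\{1,\delta\}$; combined with the monotonicity, this forces every point of $\Gamma$ with $a_{11}+a_{22}<0$ to violate $\max\{a_{11},a_{22}\}<\min\{1,\delta\}$, excluding it from $R_s(\delta)$.

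The principal obstacle is the transcendental bound $a_{11}(\omega_+)\ge\min\{1,\delta\}$ (the bound at $\omega_-$ follows by the symmetric analysis with $q_1$ and $q_2$ interchanged). Via the pure-imaginary-root formulation provided by Theorem \ref{thm.q}(i), this is equivalent to the statement: whenever $\Delta(i\beta_0;a,-a,\delta,q_1,q_2)=0$ for some $a>0$ and $\beta_0>0$, one has $a\ge\min\{1,\delta\}$. Substituting $a_{22}=-a_{11}=-a$ into the real and imaginary parts of $\Delta(i\beta_0)=0$ and eliminating the parameter $\beta_0$ yields a transcendental relation in $\beta_0,\delta,q_1,q_2$ from which a closed-form expression for $a$ can be written. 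The desired lower bound then follows by a careful trigonometric estimate involving $\sin(q_j\pi/2)$ and $\cos(q_j\pi/2)$ for $j=1,2$, together with elementary inequalities (such as AM--GM applied to $\beta_0^{q_1+q_2}+\delta$). This algebraic/trigonometric step is where the bulk of the technical work of the proof lies.
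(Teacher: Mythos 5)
Your overall scaffolding is sound and non-circular: $R_s(\delta)$ is convex, contains $(-1,-1)$, the computation $N(-1,-1,\delta,q_1,q_2)=0$ is available from the proof of Theorem \ref{thm.q}(ii), and Lemma \ref{lemma.number.roots} would indeed propagate $N\equiv 0$ over $R_s(\delta)$ once you know that $R_s(\delta)\cap\Gamma(\delta,q_1,q_2)=\emptyset$ for every $q_1,q_2\in(0,1]$. The problem is that this disjointness is not a routine reduction: in the paper it is deduced (in Step 1 of Lemma \ref{lem.Q}) \emph{from} Lemma \ref{lem.stable}, so your plan inverts the logical order and must supply an independent proof of exactly the hard estimate. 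That estimate is your ``crucial claim'' $a_{11}(\omega_+)\geq\min\{1,\delta\}$ (equivalently: if $\Delta(i\beta_0;a,-a,\delta,q_1,q_2)=0$ with $a>0$, $\beta_0>0$, then $a\geq\min\{1,\delta\}$), and you do not prove it. You only assert that eliminating $\beta_0$ from the real and imaginary parts and applying ``a careful trigonometric estimate'' and AM--GM will give the bound; no such estimate is exhibited, and it is not at all obvious how to carry it out uniformly in $\beta_0,\delta,q_1,q_2$ (the resulting relation is transcendental in all four quantities). Since the entire content of the lemma is concentrated in this step, the proposal as written has a genuine gap rather than a complete alternative proof.

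For comparison, the paper avoids the curve $\Gamma$ and the root-counting machinery altogether: it assumes a root $s_0=re^{i\theta}$ with $\Re(s_0)\geq 0$, divides the characteristic equation by $s_0^{q_1}$, takes real parts (all of $s_0^{q_2}$, $s_0^{q_2-q_1}$, $s_0^{-q_1}$ lie in the closed right half-plane), and obtains $a_{22}\geq\min\{1,-a_{11},\delta\}\bigl[r^{q_2}\cos(q_2\theta)+r^{q_2-q_1}\cos((q_2-q_1)\theta)+r^{-q_1}\cos(q_1\theta)\bigr]$. The key analytic input is the inequality $\alpha^{y}\cos y+\alpha^{y-x}\cos(y-x)+\alpha^{-x}\cos x\geq 1$ for $x,y\in[0,\pi/2]$, $\alpha>0$, proved by an elementary calculus argument in Appendix A.2; this yields $a_{22}\geq\min\{1,-a_{11},\delta\}$, contradicting $(a_{11},a_{22})\in R_s(\delta)$. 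If you want to salvage your route, you would need to prove your boundary bound with comparable rigor --- and it is likely at least as hard as the paper's Appendix A.2 inequality, which handles the whole region at once rather than only the crossing point of the curve with the line $a_{11}+a_{22}=0$.
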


\begin{proof}
Let $(a_{11},a_{22})\in R_s(\delta)$ and $(q_1,q_2)\in(0,1]^2$ arbitrarily fixed. As $a_{11}+a_{22}<0$, we may assume, without loss of generality, that $a_{11}<0$. 

Assume by contradiction that $\Delta(s;a_{11},a_{22},\delta,q_1,q_2)$ has a root $s_0=re^{i\theta}$ in the right half-plane, where $r>0$ and $\theta\in\left[0,\frac{\pi}{2}\right]$. 

Multiplying the characteristic equation by $s_0^{-q_1}$, we get:
$$
s_0^{q_2}-a_{11}s_0^{q_2-q_1}-a_{22}+\delta s_0^{-q_1}=0.
$$
Taking the real part in the above equation and noticing that $s_0^{q_2}$, $s_0^{q_2-q_1}$ and $s_0^{-q_1}$ are in the right half-plane, we obtain:
\begin{align*}
a_{22}&=\Re(s_0^{q_2})-a_{11}\Re(s_0^{q_2-q_1})+\delta\Re(s_0^{-q_1})\\ &\geq \min\{1,-a_{11},\delta\}\left[\Re(s_0^{q_2})+\Re(s_0^{q_2-q_1})+\Re(s_0^{-q_1})\right]\\
&=\min\{1,-a_{11},\delta\}\left[r^{q_2}\cos (q_2\theta)+r^{q_2-q_1}\cos((q_2-q_1)\theta) +r^{-q_1}\cos(q_1\theta)\right].
\end{align*}
It is important to remark that for any $r>0$, $q_1,q_2\in(0,1]$ and $\theta\in\left[0,\frac{\pi}{2}\right]$, the following inequality holds: 
$$r^{q_2}\cos (q_2\theta)+r^{q_2-q_1}\cos((q_2-q_1)\theta) +r^{-q_1}\cos(q_1\theta)\geq 1.$$
Indeed, denoting $q_1\theta=x\in\left[0,\frac{\pi}{2} \right]$, $q_2\theta=y\in\left[ 0,\frac{\pi}{2} \right]$ and $r^{\frac{1}{\theta}}=\alpha>0$ this inequality is equivalent to
\begin{equation}\label{ineq}
\alpha^y\cos y+\alpha^{y-x}\cos(y-x)+\alpha^{-x}\cos x\geq 1,\quad \forall~x,y\in\left[0,\frac{\pi}{2} \right],~\alpha>0,
\end{equation}
which is proved in the Appendix A.2. It follows that:
$$a_{22}\geq \min\{1,-a_{11},\delta\}>0.$$
On the other hand, as $(a_{11},a_{22})\in R_s(\delta)$, we have $a_{22}<-a_{11}$ and $a_{22}<\min\{1,\delta\}$. Hence, $a_{22}<\min\{1,-a_{11},\delta\}$, which leads to a contradiction. Therefore, we deduce that all the roots of the characteristic function $\Delta(s;a_{11},a_{22},\delta,q_1,q_2)$ are in the open left half-plane, and hence, system  \eqref{linearsys} is asymptotically stable.
\end{proof}

As sufficiency in Theorems \ref{thm.instab} and \ref{thm.stab} has been proved in the previous two lemmas, the next part of this section is devoted to proving necessity in both theorems. With this aim in mind, in what follows, we will denote by $Q(\delta)$ the region of the $(a_{11},a_{22})$-plane which is covered by the curves $\Gamma(\delta,q_1,q_2)$ defined in Lemma \ref{lem.curve.gamma}, i.e.:
$$Q(\delta)=\{(a_{11},a_{22})\in\mathbb{R}^2~:~\exists (q_1,q_2)\in(0,1]^2~\text{s.t.}~(a_{11},a_{22})\in \Gamma(\delta,q_1,q_2)\}.$$
The following lemma is the key result which allows us to prove necessity in Theorems \ref{thm.instab} and \ref{thm.stab}.

\begin{lemma}\label{lem.Q}
The following holds:
$$Q(\delta)=\mathbb{R}^2\setminus\left(R_s(\delta)\cup R_u(\delta)\right).$$
\end{lemma}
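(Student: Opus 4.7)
My plan is to establish the equality by two inclusions.

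For the forward inclusion $Q(\delta)\subseteq\mathbb{R}^2\setminus(R_s(\delta)\cup R_u(\delta))$, let $(a_{11},a_{22})\in\Gamma(\delta,q_1,q_2)$. If moreover $(a_{11},a_{22})\in R_s(\delta)$, Lemma \ref{lem.stable} yields asymptotic stability of \eqref{linearsys} for the same $(q_1,q_2)$, so every root of $\Delta$ lies in the open left half-plane, contradicting Theorem \ref{thm.q}(i), which furnishes a pair of pure imaginary roots for any point on $\Gamma$. If instead $(a_{11},a_{22})\in R_u(\delta)$, Lemma \ref{lem.unstable} supplies a strictly positive real root $s_0$ of $\Delta(\,\cdot\,;a_{11},a_{22},\delta,q_1,q_2)$. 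Perturbing the second coordinate to $a_{22}-\varepsilon$ for $\varepsilon>0$ small places the new point strictly below $\Gamma$ (since $a_{22}-\varepsilon<a_{22}=\phi_{\delta,q_1,q_2}(a_{11})$), so by Theorem \ref{thm.q}(ii) the perturbed system is asymptotically stable and no root of the perturbed $\Delta$ has non-negative real part. However, Rouch\'e's theorem applied on a small disc around $s_0$---exactly as in the proof of Lemma \ref{lemma.number.roots}---preserves a root with positive real part under a sufficiently small perturbation, a contradiction.

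For the reverse inclusion $\mathbb{R}^2\setminus(R_s(\delta)\cup R_u(\delta))\subseteq Q(\delta)$, fix $(a_{11}^\star,a_{22}^\star)$ in the complement and seek $(q_1,q_2)\in(0,1]^2$ with $a_{22}^\star=\phi_{\delta,q_1,q_2}(a_{11}^\star)$. The map $\Phi\colon(q_1,q_2)\mapsto\phi_{\delta,q_1,q_2}(a_{11}^\star)$ is continuous on the connected domain $(0,1]^2$. Along the diagonal (Remark \ref{rem.q1.q2.q}) one has $\Phi(q,q)=2\sqrt{\delta}\cos(q\pi/2)-a_{11}^\star$, which continuously sweeps the interval $[-a_{11}^\star,\,2\sqrt{\delta}-a_{11}^\star)$; if $a_{22}^\star$ lies in this range, the intermediate value theorem concludes at once. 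For remaining cases I extend $\Phi$ off the diagonal by analysing its boundary behaviour on $\partial(0,1]^2$: as $(q_1,q_2)\to(1,0^+)$ the curve $\Gamma(\delta,q_1,q_2)$ concentrates at the point $(\delta,1)$ lying on $\{a_{11}+a_{22}=\delta+1\}$, driving $\Phi$-values toward $\delta+1-a_{11}^\star$ along suitable paths (symmetrically at $(0^+,1)$ with the point $(1,\delta)$); by $(a_{11}^\star,a_{22}^\star)\notin R_u(\delta)$ this strictly exceeds $a_{22}^\star$. Analogously, off-diagonal paths near the corner $(1,1)$ drive $\Phi$ below $-a_{11}^\star$, down to the $R_s$-boundary, covering the $a_{22}^\star<-a_{11}^\star$ portion of the complement. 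An intermediate value argument along an appropriate path in $(0,1]^2$ then produces $(q_1,q_2)$ with $\Phi(q_1,q_2)=a_{22}^\star$.

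I expect the reverse inclusion to be the main obstacle. The complement of $R_s(\delta)\cup R_u(\delta)$ is a non-convex region bounded by four distinct pieces---the lines $a_{11}+a_{22}=0$ and $a_{11}+a_{22}=\delta+1$, the first-quadrant hyperbolic arc $a_{11}a_{22}=\delta$, and the critical lines $\max\{a_{11},a_{22}\}=\min\{1,\delta\}$---and showing that $\{\Gamma(\delta,q_1,q_2)\}_{(q_1,q_2)\in(0,1]^2}$ foliates it exactly requires a careful case analysis tracking which boundary piece is approached in each corner limit. The degenerate limits along $\partial(0,1]^2$ are particularly delicate, since the parametrisation $h$ collapses ($\rho_1,\rho_2\to\pm\infty$ as $q_1\to q_2$, while $\rho_1\rho_2\to 0$ as $q_j\to 0$): correctly tracing the hyperbolic arc $a_{11}a_{22}=\delta$ and the sum-boundary arc $a_{11}+a_{22}=\delta+1$ will require passing to rescaled limits of the form $\omega=c/q_j$.
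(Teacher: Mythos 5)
Your forward inclusion is sound. The $R_s(\delta)$ half coincides with the paper's argument, and for the $R_u(\delta)$ half you replace the paper's direct computation (which shows that every point of $\Gamma(\delta,q_1,q_2)$ satisfies $a_{11}+a_{22}<\delta+1$ and $a_{11}a_{22}<\delta$, via an explicit maximisation of $u(t)=\rho_2t^{q_1}-\rho_1t^{q_2}$ and a superadditivity inequality) by a root-perturbation argument combining Lemma \ref{lem.unstable}, Theorem \ref{thm.q}(ii) and Rouch\'e's theorem. That is a legitimate and somewhat shorter alternative, since Theorem \ref{thm.q} is established independently of this lemma.

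The reverse inclusion, however, contains a genuine gap: the boundary limits you assert for $\Phi$ are not the correct ones. As $(q_1,q_2)\to(q,0^+)$ with $q\in(0,1]$ fixed, the curve $\Gamma(\delta,q_1,q_2)$ does not concentrate at the point $(\delta,1)$; it converges to the L-shaped union of half-lines $\{a_{22}=1,\,a_{11}\leq\delta\}\cup\{a_{11}=\delta,\,a_{22}\leq 1\}$ (the set $H_2$ in the paper), so this corner limit drives $\Phi$ to the value $1$ when $a_{11}^\star<\delta$, \emph{not} to $\delta+1-a_{11}^\star$. The line $a_{11}+a_{22}=\delta+1$ is only reached in the \emph{double} limit $q_1,q_2\to 0$ at fixed ratio $\mu=q_2/q_1$ with the rescaled parameter $\omega=-\ln(\delta)/(q_1+q_2)$, which produces the limit points $\left(\frac{\mu-\delta}{\mu-1},\frac{\delta\mu-1}{\mu-1}\right)$ — and even then only outside the segment joining $(1,\delta)$ and $(\delta,1)$: for $1<a_{11}^\star<\delta$ the relevant piece of $\partial R_u(\delta)$ is the parabolic arc $a_{11}a_{22}=\delta$, which arises as the limit of the $\omega=0$ points of $\Gamma(\delta,q_1,q_2)$ as one order tends to zero. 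None of these mechanisms appears in your sketch. Symmetrically, the claim that paths near the corner $(1,1)$ push $\Phi$ ``down to the $R_s$-boundary'' is backwards: near $(1,1)$ the curves are uniformly close to the line $a_{11}+a_{22}=0$, so $\Phi\approx-a_{11}^\star$; when $-a_{11}^\star>\min\{1,\delta\}$ the lower endpoint $\min\{1,\delta\}$ of the section of the complement can only be approached by sending $q_1$ or $q_2$ to $0$ (through $H_1$ or $H_2$), not by staying near $(1,1)$. Finally, even with the correct limiting curves, the intermediate value argument requires knowing that the infimum and supremum of $\Phi$ over $(0,1]^2$ are exactly these boundary values; the paper secures this by proving that the intersection abscissa $a_{11}^m(q_1,q_2)$ with a line $a_{11}-a_{22}=m$ has no interior critical points on $S_\pm$, so that all extreme values come from $\partial S_\pm$. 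Your closing paragraph correctly anticipates that rescaled limits are needed, but the argument as written identifies the wrong limiting objects and therefore does not close.
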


\begin{proof} A proof by double inclusion is presented below.

\medskip
\noindent \emph{Step 1. Proof of the inclusion $Q(\delta)\subseteq\mathbb{R}^2\setminus\left(R_s(\delta)\cup R_u(\delta)\right)$.}

In the case $q_1=q_2=q$, elementary inequalities and Remark \ref{rem.q1.q2.q} provide that $\Gamma(\delta,q,q)$ are straight lines which are included in $\mathbb{R}^2\setminus\left(R_s(\delta)\cup R_u(\delta)\right)$. 

Let us now consider $q_1<q_2$ (the opposite case is treated similarly) and show that $\Gamma(\delta,q_1,q_2)\subset \mathbb{R}^2\setminus\left(R_s(\delta)\cup R_u(\delta)\right)$. Considering an arbitrary point $(a_{11},a_{22})\in \Gamma(\delta,q_1,q_2)$, it follows that there exists $\omega\in\mathbb{R}$ such that $a_{11}=\delta^{\frac{q_1}{q_1+q_2}}~h(\omega,q_1,q_2)$ and $a_{22}=\delta^{\frac{q_2}{q_1+q_2}}~h(-\omega,q_1,q_2)$, where the function $h$ is given in Lemma \ref{lem.curve.gamma}. 

Let us first show that $(a_{11},a_{22})\notin R_u(\delta)$. On one hand, one can write: 
$$a_{11}+a_{22}=u(t)+\delta u(t^{-1})$$
where $t=\delta^{\frac{1}{q_1+q_2}}e^\omega>0$ and $u(t)=\rho_2 t^{q_1}-\rho_1 t^{q_2}$, where the arguments of the functions $\rho_1$, $\rho_2$ have been dropped for simplicity. The function $u(t)$ reaches its maximal value at the point $t_{max}=\left(\frac{q_1\rho_2}{q_2\rho_1}\right)^{\frac{1}{q_2-q_1}}$ and a straightforward calculation leads to:
$$u_{max}=u(t_{max})=\left(\frac{\sin\frac{q_2\pi}{2}}{q_2}\right)^{\frac{q_2}{q_2-q_1}}\cdot\left(\frac{q_1}{\sin\frac{q_1\pi}{2}}\right)^{\frac{q_1}{q_2-q_1}}\cdot\frac{q_2-q_1}{\sin\frac{(q_2-q_1)\pi}{2}}.$$
We will next show that $u_{max}<1$. Indeed, as the function $v(x)=x\ln\left(\frac{x}{\sin x}\right)$ is positive and convex on $\left[0,\pi\right]$ with $\lim_{x\rightarrow 0}v(x)=0$, it follows that $v(x)$ is superadditive, and hence:
$$v(x)+v(y-x)< v(y)\quad,~\textrm{for any } 0<x<y\leq\frac{\pi}{2}.$$
Considering $x=\frac{q_1\pi}{2}$ and $y=\frac{q_2\pi}{2}$ in the previous inequality, we obtain 
$\ln(u_{max})<0$, and hence, $u_{max}<1$ for any $0<q_1<q_2\leq 1$. Therefore:
\begin{equation}\label{ineq.u1}
a_{11}+a_{22}=u(t)+\delta u(t^{-1})\leq u_{max}(\delta+1)<\delta+1.
\end{equation}

On the other hand, 
\begin{align*}
    a_{11}a_{22}&=\delta h(\omega,q_1,q_2)h(-\omega,q_1,q_2)=\\
    &=\delta(\rho_2e^{q_1\omega}-\rho_1e^{-q_2\omega})(\rho_2e^{-q_1\omega}-\rho_1e^{q_2\omega})=\\
    &=\delta \left[\rho_1^2+\rho_2^2-\rho_1\rho_2\left(e^{(q_1+q_2)\omega}+e^{-(q_1+q_2)\omega}\right)\right]<\\
    &< \delta \left(\rho_1^2+\rho_2^2-2\rho_1\rho_2\right)=\delta (\rho_2-\rho_1)^2=\\
    &=\delta \left(\frac{\cos\frac{(q_1+q_2)\pi}{4}}{\cos\frac{(q_2-q_1)\pi}{4}}\right)^2<\delta
\end{align*}
and hence, combined with inequality \eqref{ineq.u1} it follows that $(a_{11},a_{22})\notin R_u(\delta)$. 

Moreover, assuming by contradiction that $(a_{11},a_{22})\in R_s(\delta)$, Lemma \ref{lem.stable} implies that all roots of the characteristic function $\Delta(s;a_{11},a_{22},\delta,q_1,q_2)$ are in the open left half-plane, and hence, by Theorem \ref{thm.q} we obtain that $(a_{11},a_{22})\notin \Gamma(\delta,q_1,q_2)$, which is absurd. Therefore, $(a_{11},a_{22})\notin R_s(\delta)$. 

Hence, the proof of the inclusion $Q(\delta)\subseteq\mathbb{R}^2\setminus\left(R_s(\delta)\cup R_u(\delta)\right)$ is now complete.
\newpage
\medskip
\noindent \emph{Step 2. Proof of the inclusion $\mathbb{R}^2\setminus\left(R_s(\delta)\cup R_u(\delta)\right)\subseteq Q(\delta)$.}

Considering the function $F_\delta:\mathbb{R}\times(0,1]\times(0,1]\rightarrow\mathbb{R}^2$ defined by $$F_\delta(\omega,q_1,q_2)=\left(\delta^{\frac{q_1}{q_1+q_2}}~h(\omega,q_1,q_2),\delta^{\frac{q_2}{q_1+q_2}}~h(-\omega,q_1,q_2)\right),$$
it is easy to see that $Q(\delta)$ represents the image of the function $F_\delta$, i.e.: $Q(\delta)=F_\delta\left(\mathbb{R}\times(0,1]\times(0,1]\right)$.

From Remark \ref{rem.q1.q2.q}, it easily follows that
$$R_e(\delta)=\{(a_{11},a_{22})\in\mathbb{R}^2~:~0\leq a_{11}+a_{22}<2\sqrt{\delta}\}\subset Q(\delta).$$

Moreover, as $h(-\omega,q_1,q_2)=h(\omega,q_2,q_1)$ for any $q_1\neq q_2$, it follows that $Q(\delta)$ is symmetric with respect to the first bisector $a_{11}=a_{22}$ of the $(a_{11},a_{22})$-plane. Therefore, in order to determine $Q(\delta)$ it suffices to find its intersection with an arbitrary straight line $l_m:a_{11}-a_{22}=m$, $m\in\mathbb{R}$, which is parallel to the first bisector of the $(a_{11},a_{22})$-plane.  
First, Lemma \ref{lem.curve.gamma} implies that each curve $\Gamma(\delta,q_1,q_2)$ is the graph of a smooth, decreasing, concave, bijective function in the $(a_{11},a_{22})$-plane, and hence, it intersects the line $l_m$  exactly in one point. In other words, for arbitrarily fixed $q_1,q_2\in(0,1]$ and $m\in\mathbb{R}$, the equation
\begin{equation}\label{eq.om.star}\delta^{\frac{q_1}{q_1+q_2}}h(\omega,q_1,q_2)-\delta^{\frac{q_2}{q_1+q_2}}h(-\omega,q_1,q_2)=m
\end{equation}
has a unique solution $\omega_m^\star(q_1,q_2)$. From the implicit function theorem and the properties of the function $h$ it follows that the function $\omega^\star_m$ is continuously differentiable on the open sets 
\begin{align*}
S_{-}&=\{(q_1,q_2)\in(0,1)^2~:~q_1<q_2\}\\
S_{+}&=\{(q_1,q_2)\in(0,1)^2~:~q_1>q_2\}
\end{align*}
Therefore, the abscissa of the point of intersection $\Gamma(\delta,q_1,q_2)\cap l_m$ is 
$$a^m_{11}(q_1,q_2)=\delta^{\frac{q_1}{q_1+q_2}}h(\omega^\star_m(q_1,q_2),q_1,q_2).$$
The function $a^m_{11}$ is continuously differentiable on $S_-$ and $S_+$, and hence, $a^m_{11}(S_{\pm})$ are intervals. The problem of determining these intervals reduces to finding the extreme values of the function $a^m_{11}$ over the sets $S_-$ and $S_+$, respectively. 

Defining the functions $$\alpha_1(\omega,q_1,q_2)=\delta^{\frac{q_1}{q_1+q_2}}h(\omega,q_1,q_2)\quad\text{and}\quad  \alpha_2(\omega,q_1,q_2)=\delta^{\frac{q_2}{q_1+q_2}}h(-\omega,q_1,q_2),$$
from \eqref{eq.om.star} and the implicit function theorem it follows that 
$$\frac{\partial \omega^\star_m}{\partial q_k}(q_1,q_2)=-\left.\dfrac{\frac{\partial\alpha_1}{\partial q_k}-\frac{\partial\alpha_2}{\partial q_k}}{\frac{\partial\alpha_1}{\partial \omega}-\frac{\partial\alpha_2}{\partial \omega}}\right|_{(\omega_m^\star(q_1,q_2),q_1,q_2)}\quad,~ k=\overline{1,2}.$$

In what follows, we will show that the function $a_{11}^m$ does not have any critical points inside $S_\pm$. Indeed, assuming that $\nabla a_{11}^m(q_1,q_2)=0$ for $(q_1,q_2)\in S_\pm$, taking into account that $a_{11}^m(q_1,q_2)=\alpha_1(\omega_m^\star(q_1,q_2),q_1,q_2)$, a simple application of the chain rule leads to: 
$$\frac{\partial\alpha_1}{\partial\omega}(\omega_m^\star(q_1,q_2),q_1,q_2)\cdot \frac{\partial \omega^\star_m}{\partial q_k}(q_1,q_2)+\frac{\partial\alpha_1}{\partial q_k}(\omega_m^\star(q_1,q_2),q_1,q_2)=0\quad,~k=\overline{1,2}.$$
Combining the last two relations, it follows that: 
$$\frac{\partial\alpha_1}{\partial\omega}\cdot \frac{\partial\alpha_2}{\partial q_k}=\frac{\partial\alpha_1}{\partial q_k}\cdot \frac{\partial\alpha_2}{\partial \omega}\quad,~k=\overline{1,2},$$
where the arguments have been dropped for simplicity. Plugging in the expression of the function $h$ given in Lemma \ref{lem.curve.gamma} and eliminating $\delta$ from the previous system leads to a quadratic equation in $\xi=e^{(q_1+q_2)\omega^\star_m}$ which has a negative discriminant:  
$-\left(q_1^2\rho_2^2-q_2^2\rho_1^2\right)^2$, and hence, does not admit real roots.

Therefore, the extreme values of the function $a_{11}^m$ are reached on the boundaries of the sets $S_{\pm}$, respectively. This is equivalent to the fact that the boundary $\partial Q(\delta)$ is composed of points belonging to $\Gamma(\delta,q_1,q_2)$ when $(q_1,q_2)\in\partial S_{\pm}$. Hence, it remains to show that $\partial R_s(\delta)\cup\partial R_u(\delta)=\partial Q(\delta)$.

On one hand, due to the fact that $h(\omega,q_1,q_2)\rightarrow 1$ as $q_1\rightarrow 0$, for any $\omega\in\mathbb{R}$ and $q_2\in(0,1]$, it is easy to see that as $(q_1,q_2)\rightarrow (0,q)$, with $q\in(0,1]$, the curve $\Gamma(\delta,q_1,q_2)$ approaches the union of half-lines given parametrically by
$$H_1:~~\begin{cases}
a_{11}=1+\min\{0,t\}\\
a_{22}=\delta(1-\max\{0,t\})
\end{cases}\quad ,~t\in\mathbb{R}.$$

Similarly, due to the property $h(-\omega,q_1,q_2)=h(\omega,q_2,q_1)$ which holds for any $q_1\neq q_2$, we obtain that as $(q_1,q_2)\rightarrow (q,0)$, with $q\in(0,1]$, the curve $\Gamma(\delta,q_1,q_2)$ approaches the union of half-lines 
$$H_2:~~\begin{cases}
a_{11}=\delta(1+\min\{0,t\})\\
a_{22}=1-\max\{0,t\}
\end{cases}\quad ,~t\in\mathbb{R}.$$  

Moreover, Remark \ref{rem.q1.q2.q} provides that $$\Gamma(\delta,1,1):~~ a_{11}+a_{22}=0.$$

Therefore, a simple geometric analysis of the relative positions of the half-lines $H_1$ and $H_2$ given above and the line $a_{11}+a_{22}=0$ shows that $\partial R_s(\delta)\subset \partial Q(\delta)$. 

On the other hand, considering $\delta\neq 1$ and choosing $\omega=0$ in the parametric equations of the curve $\Gamma(\delta,q_1,q_2)$, $q_1\neq q_2$, given by Lemma \ref{lem.curve.gamma}, it follows that the points $$(a_{11}^0(q_1,q_2),a_{22}^0(q_1,q_2))=\left(\delta^{\frac{q_1}{q_1+q_2}}(\rho_2-\rho_1),\delta^{\frac{q_2}{q_1+q_2}}(\rho_2-\rho_1)\right)$$ belong to $Q(\delta)$. Let us also notice that the point $\left(\delta^{\frac{q_1}{q_1+q_2}},\delta^{\frac{q_2}{q_1+q_2}}\right)$ belongs to the arc of the parabola $P:~ a_{11}a_{22}=\delta$, considered between the points $(1,\delta)$ and $(\delta,1)$. Hence:
\begin{align*}
d((a_{11}^0,a_{22}^0),P)&\leq d\left((a_{11}^0,a_{22}^0),\left(\delta^{\frac{q_1}{q_1+q_2}},\delta^{\frac{q_2}{q_1+q_2}}\right)\right)=\\
&=|\rho_2-\rho_1-1|\sqrt{\delta^{\frac{2q_1}{q_1+q_2}}+\delta^{\frac{2q_2}{q_1+q_2}}}\leq \\
&\leq \sqrt{2}\max(1,\delta)|\rho_2-\rho_1-1|=\\
&=\sqrt{2}\max(1,\delta)\left(1-\frac{\cos\frac{(q_1+q_2)\pi}{4}}{\cos\frac{(q_2-q_1)\pi}{4}}\right)\longrightarrow 0,
\end{align*}
as either $q_1\rightarrow 0$ or $q_2\rightarrow 0$. Therefore, $P\subset \partial Q(\delta)$.

In a similar manner, considering $\omega=-\frac{\ln(\delta)}{q_1+q_2}$ in the parametric equations of the curve $\Gamma(\delta,q_1,q_2)$, $q_1\neq q_2$, given by Lemma \ref{lem.curve.gamma}, it follows that the points $(a_{11}^\delta(q_1,q_2),a_{22}^\delta(q_1,q_2))=(\rho_2-\delta\rho_1,\delta\rho_2-\rho_1)\in Q(\delta)$. For an arbitrary $\mu>0$, $\mu\neq 1$, let us consider the sequence of points $$M_n=\left(a_{11}^\delta\left(\frac{1}{n},\frac{\mu}{n}\right),a_{22}^\delta\left(\frac{1}{n},\frac{\mu}{n}\right)\right)\in Q(\delta), \quad n\in \mathbb{Z}^+,~n>[\mu].$$
Applying L'Hospital's rule results in
$$\lim_{n\rightarrow\infty} M_n=\left(\frac{\mu-\delta}{\mu-1},\frac{\delta\mu-1}{\mu-1}\right):=M_\mu.$$
It is now easy to deduce that the set of limit points $M_\mu$, with $\mu>0$, $\mu\neq 1$, is in fact the straight line $a_{11}+a_{22}=\delta+1$, except the segment joining the points of coordinates $(1,\delta)$ and $(\delta,1)$. Therefore, the straight line $a_{11}+a_{22}=\delta+1$ without the segment between $(1,\delta)$ and $(\delta,1)$ is also included in $\partial Q(\delta)$. Combined with the previous result concerning the arc of parabola $P$, it follows that $\partial R_u(\delta)\subset \partial Q(\delta)$.

The case $\delta=1$ is trivial, as the boundary $\partial R_u(\delta)$ becomes the whole straight line $a_{11}+a_{22}=2$, which is the limit of $\Gamma(1,q,q)$ as $q\rightarrow 0$. 

Hence, the proof is now complete.  
\end{proof}

\begin{rem}
In Fig. \ref{fig.regions.all}, for $\delta=4$, we exemplify the set $Q(\delta)$ and the results presented in Lemma \ref{lem.Q}, by plotting a large number of curves $\Gamma(\delta,q_1,q_2)$ for $(q_1,q_2)=\left(\frac{j}{40},\frac{k}{40}\right)$, with $j,k=\overline{1,40}$. The union of all these curves fills in the white region represented in Fig. \ref{fig.regiuni}, which separates the stability region $R_s(\delta)$ and the instability region $R_u(\delta)$.
\end{rem}

\begin{figure}[htbp]
		\centering
		\includegraphics*[width=0.5\linewidth]{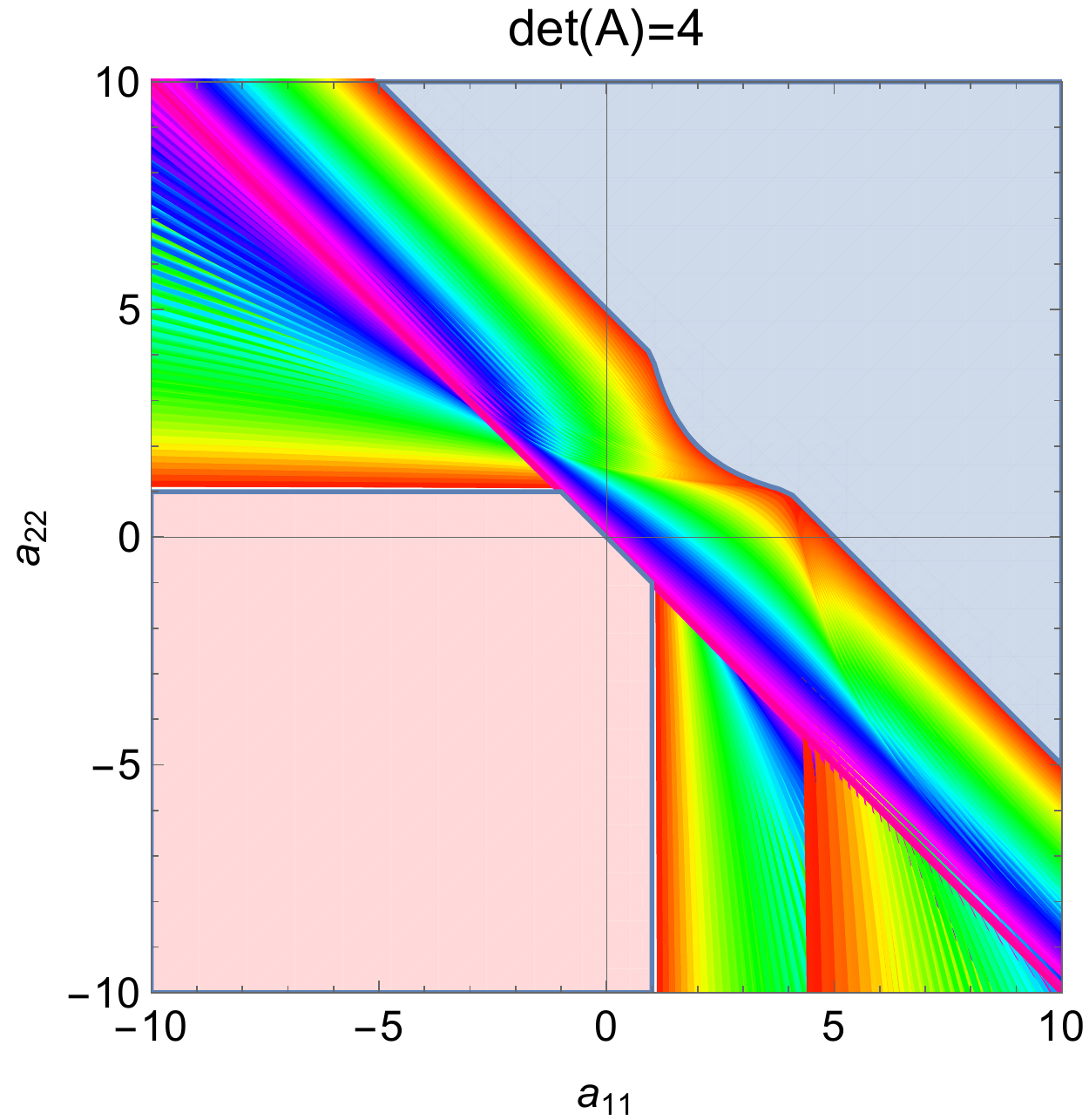}
			\caption{Curves $\Gamma(\delta,q_1,q_2)$ given by Lemma \ref{lem.curve.gamma}, for $\det(A)=\delta=4$ and  $q_i\in\left\{\frac{k}{40},~k=\overline{1,40}\right\}$, $i=\overline{1,2}$ ($1600$ curves), color-coded from red to violet according to increasing values of $q_1q_2$. The red/blue shaded regions represent the sets $R_u(\delta)$ and $R_s(\delta)$, respectively.}
	\label{fig.regions.all}
\end{figure}

We finally present the proofs of the main theorems.
\newpage
\begin{proof}[Proof of Theorem \ref{thm.instab}]$ $

\emph{Proof of statement (i).}
Because $\Delta(0)=\delta<0$ and $\Delta(\infty)=\infty$, due to the fact that $\Delta$ is continuous on $(0,\infty)$, it results that it has at least one strictly positive real root. Therefore, based on Proposition \ref{thm.lin.stab}, it follows that system \eqref{linearsys} is unstable.

\emph{Proof of statement (ii).}
If $\Delta(0)=\delta>0$, sufficiency is provided by Lemma \ref{lem.unstable}. For the proof of necessity, assuming that system \eqref{linearsys} is unstable, regardless of the fractional orders $q_1$ and $q_2$, and assuming by contradiction that $(a_{11},a_{22})\notin R_u(\delta)$, using Lemma \ref{lem.Q} it follows that there exist $q_1^*,q_2^*\in (0,1]$ (not unique) such that $(a_{11},a_{22})$ is in the connected component of $\mathbb{R}^2\setminus \Gamma(\delta,q_1^*,q_2^*)$ which includes $R_s(\delta)$, i.e. $(a_{11},a_{22})$ is below the curve $\Gamma(\delta,q_1^*,q_2^*)$. Hence, based on Theorem \ref{thm.q}, it follows that system \eqref{linearsys} with the particular fractional orders $q_1^*,q_2^*$ is asymptotically stable, which is absurd. \end{proof}

\begin{proof}[Proof of Theorem \ref{thm.stab}]$ $
Sufficiency is provided by Lemma \ref{lem.stable}. As for the proof of necessity, let us assume that system \eqref{linearsys} is asymptotically stable, regardless of the fractional orders $q_1$ and $q_2$, and assume by contradiction that $(a_{11},a_{22})\notin R_a(\delta)$. Lemma \ref{lem.Q} provides that there exist $q_1^*,q_2^*\in (0,1]$ (not unique) such that $(a_{11},a_{22})$ is in the connected component of $\mathbb{R}^2\setminus \Gamma(\delta,q_1^*,q_2^*)$ which includes $R_u(\delta)$, i.e. $(a_{11},a_{22})$ is above the curve $\Gamma(\delta,q_1^*,q_2^*)$. Hence, based on Theorem \ref{thm.q}, it follows that system \eqref{linearsys} with the particular fractional orders $q_1^*,q_2^*$ is not asymptotically stable, which contradicts the initial hypothesis.
\end{proof}

\section{Conclusions}

In this work, a complete characterization of fractional-order-independent stability and instability properties of two-dimensional incommensurate linear fractional-order systems has been achieved. Moreover, necessary and sufficient conditions have also been presented for the stability and instability of two-dimensional fractional-order systems, depending on the choice of the fractional orders of the Caputo derivatives. These results provide comprehensive practical tools for a straightforward stability analysis of two-dimensional fractional-order systems encountered in real world applications.  

Extension of these results to the case of two-dimensional systems of fractional-order difference equations requires further investigation. A possible generalization to higher-dimensional fractional-order systems is still an open question which will be addressed in future research, taking into account the increasing complexity of the problem.

\section*{Appendix} 

\subsection*{A.1. Boundedness of the set of unstable roots of $\Delta(s;a_{11},a_{22},\delta,q_1,q_2)$}

 The characteristic equation of system \eqref{linearsys} is
 $$s^{q_1+q_2}-a_{11}s^{q_2}-a_{22}s^{q_1}+\delta=0.$$
Denoting $\alpha=\dfrac{q_1+q_2}{2}$ and $\beta=\dfrac{q_2-q_1}{2}$, with $0\leq \beta<\alpha\leq 1$, the characteristic equation can be written as
$$s^{2\alpha}-a_{11}s^{\alpha+\beta}-a_{22}s^{\alpha-\beta}+\delta=0.$$
Dividing by $\sqrt{\delta}s^\alpha$, we obtain:
\begin{equation}\label{eq.alpha.beta}\dfrac{s^\alpha}{\sqrt{\delta}}+\dfrac{\sqrt{\delta}}{s^\alpha}=\dfrac{a_{11}}{\sqrt{\delta}}s^\beta+\dfrac{a_{22}}{\sqrt{\delta}}s^{-\beta}.
\end{equation}
Denoting $z=\dfrac{s^\alpha}{\sqrt{\delta}}$, $c_1=a_{11}(\sqrt{\delta})^{\frac{\beta}{\alpha}-1}$, $c_2=a_{22}(\sqrt{\delta})^{-\frac{\beta}{\alpha}-1}$ and $c=(c_1,c_2)$, equation \eqref{eq.alpha.beta} becomes
$$z+z^{-1}=c_1z^{\frac{\beta}{\alpha}}+c_2z^{-\frac{\beta}{\alpha}}.$$
Denoting $p=\dfrac{q_1+q_2}{2\min\{q_1,q_2\}}\geq 1$ and $q=\dfrac{\alpha}{|\beta|}=\dfrac{q_1+q_2}{|q_1-q_2|}\geq 1$ it follows that $\dfrac{1}{p}+\dfrac{1}{q}=1$, and Young's inequality provides:
\begin{align}
\nonumber \left| |z|-|z|^{-1} \right|&\leq |z+z^{-1}|\leq |c_1|\cdot |z|^{\frac{\beta}{\alpha}}+|c_2|\cdot |z|^{-\frac{\beta}{\alpha}}&\\
\nonumber &\leq \dfrac{1}{p}(|c_1|^p+|c_2|^p)+\dfrac{1}{q}\left( |z|+|z|^{-1} \right)&\\
 \label{ineq.z}   &= \left( 1-\dfrac{1}{q} \right)\|c\|_p^p+\dfrac{1}{q}\left( |z|+|z|^{-1} \right).
\end{align}
On one hand, if $|z|>|z|^{-1}$, or equivalently $|z|>1$, inequality \eqref{ineq.z} can be written as the quadratic inequality
$$ |z|^2-\|c\|_p^p\cdot |z|- \gamma\leq 0,$$
where $\gamma=\dfrac{q+1}{q-1}$, which in turn, implies that
\begin{equation}\label{ineq.z1} |z|\leq \|c\|_p^p+\sqrt{\gamma}.
\end{equation}
On the other hand, if $|z|<|z|^{-1}$, or equivalently  $|z|<1$, inequality \eqref{ineq.z} leads to the quadratic inequality
$$\gamma|z|^2+\|c\|_p^p\cdot |z|-1\geq 0,$$
and hence:
\begin{equation}
\label{ineq.z2}|z|\geq \dfrac{-\|c\|_p^p+\sqrt{\|c\|_p^{2p}+4\gamma}}{2\gamma}
\end{equation}
In the above calculations, $$\|c\|_p^p=|c_1|^p+|c_2|^p=|a_{11}|^p(\sqrt{\delta})^{p\left( \frac{\beta}{\alpha}-1 \right)}+|a_{22}|^p(\sqrt{\delta})^{-p\left( \frac{\beta}{\alpha}+1 \right)}.$$
Furthermore, as
$$p\left( \dfrac{\beta}{\alpha}-1 \right)=\dfrac{q_1+q_2}{2\min\{q_1,q_2\}}\cdot \left( \dfrac{q_2-q_1}{q_2+q_1}-1 \right)=\dfrac{-q_1}{\min\{q_1,q_2\}}$$
$$-p\left( \dfrac{\beta}{\alpha}+1 \right)=-\dfrac{q_1+q_2}{2\min\{q_1,q_2\}}\cdot \left( \dfrac{q_2-q_1}{q_2+q_1}+1 \right)=\dfrac{-q_2}{\min\{q_1,q_2\}}.$$
we have:
$$
    \|c\|_p^p=|a_{11}|^p(\sqrt{\delta})^{-\frac{q_1}{\min\{q_1,q_2\}}}+|a_{22}|^p(\sqrt{\delta})^{-\frac{q_2}{\min\{q_1,q_2\}}}\leq D(\delta,q_1,q_2)\cdot \|a\|_p^p,
$$
where $D(\delta,q_1,q_2)=\max\left \{ (\sqrt{\delta})^{-\frac{q_1}{\min\{q_1,q_2\}}},(\sqrt{\delta})^{-\frac{q_2}{\min\{q_1,q_2\}}} \right \}$.

Therefore, inequalities \eqref{ineq.z1} and \eqref{ineq.z2} provide that 
\begin{equation}\label{ineq.z.final}
    \dfrac{-\|c\|_p^p+\sqrt{\|c\|_p^{2p}+4\gamma}}{2\gamma}\leq |z|\leq \|c\|_p^p+\sqrt{\gamma}
\end{equation}
Considering the decreasing function $f:\mathbb{R}^+\to\mathbb{R}^+$ defined by $$f(u)=\dfrac{-u+\sqrt{u^{2}+4\gamma}}{2\gamma}$$ and the increasing function $F:\mathbb{R}^+\to\mathbb{R}^+$ $$F(u)=u+\sqrt{\gamma},$$ inequality \eqref{ineq.z} becomes $$f(\|c\|_p^p)\leq |z|\leq F(\|c\|_p^p).$$

Taking into consideration that $\|c\|_p^p\leq D(\delta,q_1,q_2)\|a\|_p^p$ and $z=\dfrac{s^\alpha}{\sqrt{\delta}}$, the previous inequality implies
$$
    f\left(D(\delta,q_1,q_2)\|a\|_p^p\right)\leq f(\|c\|_p^p)\leq \dfrac{|s|^\alpha}{\sqrt{\delta}}\leq F(\|c\|_p^p)\leq F\left(D(\delta,q_1,q_2)\|a\|_p^p\right),$$
and thus:
$$\left( \sqrt{\delta}f(D\left(\delta,q_1,q_2)\|a\|_p^p\right) \right)^{\frac{1}{\alpha}}\leq |s|\leq \left( \sqrt{\delta}F\left(D(\delta,q_1,q_2)\|a\|_p^p\right) \right)^{\frac{1}{\alpha}}.
$$

Therefore, considering the decreasing function $l_{\delta,q_1,q_2}:\mathbb{R}^+\to\mathbb{R}^+$ defined by $$ l_{\delta,q_1,q_2}(v)=\left( \sqrt{\delta}f\left( D(\delta,q_1,q_2)v^p \right) \right)^{\frac{1}{\alpha}} $$ and the increasing function $L_{\delta,q_1,q_2}:\mathbb{R}^+\to\mathbb{R}^+$ defined by $$ L_{\delta,q_1,q_2}(v)=\left( \sqrt{\delta}F\left( D(\delta,q_1,q_2)v^p \right) \right)^{\frac{1}{\alpha}} $$ inequality \eqref{ineq.s} is obtained.

\subsection*{A.2. Proof of inequality \eqref{ineq}.}

Because of symmetry, it suffices to prove inequality \eqref{ineq} for $\alpha\geq 1$, i.e.
$$
\alpha^y\cos y+\alpha^{y-x}\cos(y-x)+\alpha^{-x}\cos x\geq 1,\quad \forall~x,y\in\left[0,\frac{\pi}{2} \right],~\alpha\geq 1.
$$
Denoting $h(x)=\alpha^{y-x}\cos(y-x)+\alpha^{-x}\cos x$, its derivative is
$$h'(x)=-\alpha^{y-x}\ln(\alpha)\cos(y-x)+\alpha^{y-x}\sin (y-x)-\alpha^{-x}\ln(\alpha)\cos x-\alpha^{-x}\sin x$$
The equation $h'(x)=0$ is equivalent to
$$\tan x=\frac{\alpha^{y}\sin y-\alpha^{y}\ln(\alpha)\cos y-\ln(\alpha)}{1+\alpha^{y}\ln(\alpha)\sin y+\alpha^{y}\cos y}$$
which has a solution $x^*(y)$ on the interval $\left[0,\frac{\pi}{2}\right)$ if and only if the numerator of the right-hand term of the above equations positive, i.e. 
\begin{equation}\label{ineq.appendix.a2.1}\alpha^{y}(\sin y-\ln(\alpha)\cos y)\geq \ln(\alpha).
\end{equation}

If inequality \eqref{ineq.appendix.a2.1} does not hold, it means in fact that $h'(0)<0$, which implies $h'(x)<0$, for any $x\in\left(0,\frac{\pi}{2}\right)$. Therefore the  function $h$ is decreasing and its minimal value is  $h\left(\frac{\pi}{2}\right)=\alpha^{y-\frac{\pi}{2}}\sin y$.

Otherwise, if inequality \eqref{ineq.appendix.a2.1} holds, i.e. $h'(0)\geq 0$, it turns out that $x^*(y)$ is a maximum point of $h(x)$ and the function $h$ is increasing on the interval $(0,x^*(y))$ and decreasing on the interval $\left(x^*(y),\frac{\pi}{2}\right)$. Therefore, the minimal value of the function $h$ is either  $h(0)=\alpha^y\cos y+1$ or $h\left(\frac{\pi}{2}\right)=\alpha^{y-\frac{\pi}{2}}\sin y$. However, it is easy to see that $\alpha^{y-\frac{\pi}{2}}\sin y\leq 1$, for any $y\in\left( 0,\frac{\pi}{2}\right)$, and hence, the minimal value of the function $h$ is   $h\left(\frac{\pi}{2}\right)=\alpha^{y-\frac{\pi}{2}}\sin y$.

Therefore, we obtain that 
$$h(x)\geq \alpha^{y-\frac{\pi}{2}}\sin y, \quad \forall~x,y\in \left[0,\frac{\pi}{2}\right],~\alpha\geq 1,$$
which leads to:
\begin{equation}\label{ineq.appendix.a2.2}\alpha^y\cos y+\alpha^{y-x}\cos(y-x)+\alpha^{-x}\cos x \geq \alpha^y\cos y+\alpha^{y-\frac{\pi}{2}}\sin y.\end{equation}
Considering the function $g(y)=\alpha^y\cos y+\alpha^{y-\frac{\pi}{2}}\sin y$ and its derivative
$$g'(y)=\alpha^y\ln(\alpha)\cos y-\alpha^y \sin y+\alpha^{y-\frac{\pi}{2}}\ln(\alpha)\sin y+\alpha^{y-\frac{\pi}{2}}\cos y,$$
we obtain that $g'(y)=0$ if and only if
$$y=y^*=\arctan\left(\frac{\ln(\alpha)+\alpha^{-\frac{\pi}{2}}}{1-\ln(\alpha)\alpha^{-\frac{\pi}{2}}}\right).$$
It can be easily seen that $y^*$ is a local maximum point for the function $g$ on the interval $\left(0,\frac{\pi}{2}\right)$, and hence, the minimal values of $g$ are reached in $g(0)=g\left(\frac{\pi}{2}\right)=1$. Therefore, $g(y)\geq 1$, for any $y\in \left[0,\frac{\pi}{2}\right]$, and combined with  \eqref{ineq.appendix.a2.2}, we obtain inequality (\ref{ineq}).

\bibliographystyle{plain}

\bibliography{bibliografie}
\end{document}